\newtheorem{Thm}[equation]{Theorem}
\newtheorem{Lem}[equation]{Lemma}
\newtheorem{Prop}[equation]{Proposition}
\theoremstyle{remark}
\newtheorem{Rem}[equation]{Remark}
\theoremstyle{remark}
\theoremstyle{definition}
\newtheorem{Def}[equation]{Definition}
\numberwithin{equation}{section}
\newcommand{\R}{\mathrm{\bf R}}           % Use for real numbers.
\newcommand{\C}{\mathrm{\bf C}}           % Use for complex numbers.
\newcommand{\fa}{{\mathfrak a}}             % more Fraktur
\newcommand{\fg}{{\mathfrak g}}
\newcommand{\fh}{{\mathfrak h}}
\newcommand{\fk}{{\mathfrak k}}
\newcommand{\fl}{{\mathfrak l}}
\newcommand{\fp}{{\mathfrak p}}
\newcommand{\fq}{{\mathfrak q}}
\newcommand{\ft}{{\mathfrak t}}
\newcommand{\fu}{{\mathfrak u}}
\newcommand{\gt}{\theta}
\newcommand{\gT}{\Theta}
\newcommand{\Cal}{\mathcal}
\renewcommand{\bar}[1]{\overline{#1}}
\begin{document}
\parskip=5pt
\baselineskip= 14pt

\title[Unitary Globalization]
{ On the Unitary Globalization of Cohomologically Induced Modules}
%%%%%%%%%%%%%%%%%%%%%%%%%%%%%%%%%%%%%%%%%%%%%%%%%%%%
%%%%%%%%%%%%%%%%%%%%%%%%%%%%%%%%%%%%%%%%%%%%%%%%%%%%
%%%%%%%%%%%%%%%%%%%%%%%%%%%%%%%%%%%%%%%%%%%%%%%%%%%%
\author{L. Barchini}
\address{Oklahoma State University\\
Mathematics Department\\
    Stillwater, Oklahoma 74078}
\email {leticia@math.okstate.edu}
\author{Petr Somberg}
\address{Mathematical Institute,\\
Charles University,\\
 Sokolovsk\'a 83, Prague\\
 Czech Republic}
\email{somberg@karlin.mff.cuni.cz}
\begin{abstract}
We describe the unitary globalization
of cohomologically induced modules $A_{\fq}(\lambda)$.
The purpose of the paper is to give a geometric realization
of the unitarizable modules. Our results do not constitute a proof of unitarity.
\end{abstract}

%%%%%%%%%%%%%%%%%%%%%%%%%%%%%%%%%%%%%In our%%%%%%%%%%%%%%%%%%%%%%%%%%%%%%
\maketitle
%%%%%%%%%%%%%%%%%%%%%%%%%%%%%%%%%%%%%%%%%%%%%%%%%%%%%%%%

\section{Introduction} 

The orbit method suggests a close connection between 
irreducible representations of a Lie group $G_\R$ and co-adjoint orbits.
In the case of nilpotent groups, unitary representations correspond to co-adjoint orbits.
Kirillov used such correspondence to geometrically construct unitary representations of nilpotent groups.
When $G_\R$ is a  real reductive group, attached to elliptic co-adjoint orbits is a family of
irreducible unitarizable $(\fg, K)$-modules, $A_{\fq}(\lambda)$. These are  called cohomologically induced modules. The purpose of this paper is to  give a geometric description of the corresponding $G_\R$ unitary representations.

The representation theory of $G_\R$ is more subtle than that of its Lie algebra. According to a theorem of
Harish-Chandra, the space of $K$-finite vectors of an admissible irreducible $G_\R$-representation is
an irreducible $(\fg, K)$-module. This relationship between $G_\R$ and $(\fg, K)$ modules is not bijective.
The subtlety comes from the fact that different topologies yield different $G_\R$-modules. It is known, from
the work of Casselman, Wallach \cite{Casselman89} and Schmid \cite{Schmid85}, that each admissible irreducible 
$(\fg, K)$-module admits a maximal globalization (resp. minimal globalization) to a $G_\R$ module over a
Frechet space, $\Cal F$ (resp. to a $G_\R$ module over the topological dual of $\Cal F$).  When the $(\fg, K)$-module is an 
$A_{\fq}(\lambda)$,
the maximal globalization (minimal globalization)  corresponds to a Dolbeault cohomology representation, $H^{n,s}(G_\R/L_\R, \Cal L_{\lambda})$,
(corresponds to a compactly supported cohomology space). The unitary globalization should correspond to
a topological space ``in between'' the minimal and the maximal globalization.  This is the space we want to describe.

 In \cite{Vogan84}, Vogan  proposed to (a) explicitly describe the $G_\R$-invariant Hermitian form
on the minimal globalization of $A_{\fq}(\lambda)$ (identified as a space of compactly supported cohomology)
and (b) describe the unitary globalization as the completion of the compactly supported cohomology
with respect to the Hermitian form given in (a). This proposal amounts to finding an explicit $G_\R$-intertwining map
from the Hermitian dual $H^{n,s}(G_\R/L_\R, \Cal L_{\lambda})^h$
 to  $H^{n,s}(G_\R/L_\R, \Cal L_{\lambda})$.  Moreover, such map
 is a ``kernel type '' transform. Indeed, if $(G_\R/ L_\R)^{\text{opp}}$ is
 $G_\R/L_\R$ endowed with the opposite complex structure, then
 the kernel is determined by a $\text{diag }(G_\R\times G_\R)$-invariant class
$[w]$  in $H^{(n,n)(s,s)}(G_\R/L_\R\times (G_\R/L_\R)^{\text{opp}}, \Cal L_{\lambda}\otimes \Cal L_{-\lambda})$.
 The problem is to describe such a cohomology class.
 
 Under some positivity assumptions on $\lambda$, $A_{\fq}(\lambda)$ can be also realized  as the space of 
 $K$-finite 
 solutions  of an elliptic differential operator $\Cal D$, acting on sections of a bundle over
 $G_\R/K_\R$.  This is indeed the content of ([\cite{Wong95}, Corollary 38 and Section 7]).
 The geometric construction of Dolbeault cohomology is related to the solution space of $\Cal D$
 via the so-called Real Penrose transform.
 Moreover, the Real Penrose transform determines an isomorphism between 
 the cohomology realization and $\mathop{Ker } \Cal D$.  See ([\cite{Wong95}, Section 7]),
 ([\cite{BKZ}, Section 10]) and  \cite{B95}. In this paper we determine
 (a) the hermitian dual $(\mathop{Ker } \Cal D\big)^h$, (b) the space of continuous $G_\R$-intertwining maps
 from $(\mathop{Ker } \Cal D\big)^h$ to  $\mathop{Ker } \Cal D$. Such intertwining maps are also of
 ``kernel type". We describe the kernels in terms of  generalized spherical functions, $F$.
 In particular, when $A_{\fq}(\lambda)$ is the Harish-Chandra module of a representation in the discrete series,
$F$ is the function defined by Flensted-Jensen in \cite{Flensted-Jensen80}. The function $F$, given in
Theorem \ref{main}, depends solely on the minimal $K$-type of $A_{\fq}(\lambda)$. This is consistent with the fact
that $A_{\fq}(\lambda)$ is unitary if and only if the Hermitian form is definite on its bottom layer, see \cite{Vogan84}.
 The cohomology class $[w]$ is completely determined by $F$.

%%%%%%%%%%%%%%%%%%%%%%%%%%%%%%%%%%%%%%%%%%%%%%%%%%%%%%%%%
%%%%%%%%%%%%%%%%%%%%%%%%%%%%%%%%%%%%%%%%%%%%%%%%%%%%%%%%%
.

\section{The maximal globalization of $A_{\fq}(\lambda)$}\label{sec: DC}

\subsection{Dolbeault Cohomology}  We recall  
results  from \cite{Vogan84} and \cite{Wong95} that will be relevant to our work.
 Our underlying group  $G_\R$ is assumed to be  real reductive  with complexification $G$ and Cartan involution $\gT$.
We let $K_\R$ be the fixed point group of $\gT$, the maximal compact subgroup.
We denote the Lie algebra of Lie groups by $\fg_\R$, $\fk_\R$ etc., and their complexifications by $\fg$, $\fk$, etc.
Letting $\gt$ be the differential of $\gT$ we write the decomposition of  $\fg$ into $\pm 1$ eigenspaces
as $\fg = \fp \oplus \fk$. We choose a Cartan  subalgebra $\ft \subset \fk$ and extend it to a Cartan subalgebra $\fh = \ft \oplus \fa$
of $\fg$. Using the Killing form, $B(\cdot, \cdot)$, we consider
$\ft^*\subset \fh^*\subset \fg^*$. Then an element $\lambda \in \ft^*$ is elliptic, and the orbit
$G_\R\cdot \lambda \subset \fg^*$ is an elliptic co-adjoint orbit.  We may identify this orbit with the homogeneous
space $G_\R/L_\R$, where $L_\R$ is the centralizer in $G_\R$ of $\lambda$.
On the other hand, $\lambda$ defines a $\gt$-stable parabolic
subalgebra of $\fg$ as follows. Denote by $\Delta = \Delta(\fg, \fh)$   the roots of  $\fh$ in $\fg$. Then the
parabolic subalgebra associated to $\lambda$ is 
$\fq = \fl \oplus \fu$,
where the Levi factor  $\fl$ is spanned by $\fh$ and all root spaces $\fg^{\alpha}$
with $\langle \lambda,  \alpha \rangle = 0$, and $\fu$ is spanned
by all root spaces $\fg^{\alpha}$
 with $\langle \lambda, \alpha \rangle  > 0$. 
If $\Cal Q$ is  the normalizer of $\fq$ in $G$, one
sees that $ L_\R = \Cal Q \cap G_\R$, so $G_\R/L_\R$ embeds into the generalized flag variety
$G/\Cal Q$
as an open subset. In particular, $G_\R/L_\R$ has a $G_\R$-invariant complex structure; the
antiholomorphic tangent space at the identity coset is naturally identified with $\fg/\fq \simeq \fu$.
A similar construction makes $K_\R/(K_\R\cap L_\R)$ into a complex compact submanifold of $G_\R/L_\R$.

 Observe that each $\theta$-stable
parabolic subalgebra  $\fq $  containing $\fl$ gives a complex structure on
$G_\R/L_\R$; these are in fact all different. In the language of geometric quantization, these
parabolic subalgebras are the invariant complex polarizations at $\lambda$. Typically, in geometric
quantization, one chooses a particular polarization and this is what we will do here.

To attach a representation to $G_\R \cdot \lambda$, we assume that $\lambda$ lifts to a character $\chi_{\lambda}$ of
$L_\R$. Then there is  a holomorphic homogeneous line bundle associated to $\chi_{\lambda}$. If $n $ is the complex dimension of $G_\R/L_\R$, 
it is natural  to attach cohomology representations $H^{n,p}(G_\R/L_\R, \Cal L_{\lambda})$ to the orbit
$G_\R \cdot \lambda$.  The Dolbeault cohomology  $H^{n,p}(G_\R/L_\R, \Cal L_{\lambda})$ can be computed, for
example, by Leray covers, by $C^{\infty}$-differential forms or by currents (differential forms with distribution
coefficients). All these approaches yield the same cohomology groups, as vector spaces. Indeed, more is true.
One can define strong topologies on $H^{n,p}(G_\R/L_\R, \Cal L_{\lambda})$ with respect to Leray covers,
as well as with respect to $C^{\infty}$-forms or currents, see \cite{Laufer67}. 
(For example, the strong topology with respect
to  $C^{\infty}$-forms is given by uniform convergence on compact sets for all derivatives of the coefficients when written in terms of coordinates in the local charts.) 

\begin{Thm} [\cite{Laufer67}, Theorem 2.1, Theorem 3.2] \label{Laufer} The strong topologies on\newline
$H^{n,p}(G_\R/L_\R, \Cal L_{\lambda})$ with respect to Leray covers, $C^{\infty}$-forms and currents
all coincide.
\end{Thm}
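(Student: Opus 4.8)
The plan is to show that the classical vector-space isomorphisms comparing these three computations of $H^{n,p}(X,\Cal L_{\lambda})$, where $X = G_\R/L_\R$, are realized by \emph{continuous} linear maps admitting \emph{continuous} homotopy inverses; on cohomology such a map is automatically a homeomorphism. So let $\Cal S$ be the sheaf of germs of holomorphic $\Cal L_{\lambda}$-valued $(n,0)$-forms, and $\Cal A^{n,q}$, $\Cal C^{n,q}$ the sheaves of smooth, resp. distributional, $\Cal L_{\lambda}$-valued $(n,q)$-forms; fix a countable, locally finite Leray cover $\mathfrak U=\{U_i\}$ of $X$ by Stein open sets, so that $\check C^{\bullet}(\mathfrak U,\Cal S)$ computes $H^{\bullet}(X,\Cal S)\cong H^{n,\bullet}(X,\Cal L_{\lambda})$. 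The three complexes to compare are
\[
 \check C^{\bullet}(\mathfrak U,\Cal S),\qquad \Gamma(X,\Cal A^{n,\bullet}),\qquad \Gamma(X,\Cal C^{n,\bullet}),
\]
whose cochain groups carry natural topologies --- Fréchet in the first two cases --- for which the Čech coboundary, $\overline{\partial}$, the restriction maps, and multiplication by a fixed smooth function are continuous; each cohomology space is topologized as the subquotient $Z^{n,p}/B^{n,p}$, not assumed Hausdorff.

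The organizing principle is elementary: for complexes of locally convex spaces with continuous differentials, a continuous chain homotopy equivalence induces a topological isomorphism on cohomology with the subquotient topologies. Indeed, if $f,g$ are continuous chain maps with $gf=\mathrm{id}+dh+hd$ for a continuous $h$, then $gf(z)=z+d(hz)$ for a cocycle $z$, so $g_*f_*=\mathrm{id}$ on $Z^{n,p}/B^{n,p}$, and symmetrically $f_*g_*=\mathrm{id}$; as $f_*,g_*$ are continuous they are mutually inverse homeomorphisms. This uses neither the open mapping theorem nor Hausdorffness, so it applies to the current complex as well as the Fréchet ones and in every degree $p$. Hence it suffices to produce continuous chain homotopy equivalences from $\check C^{\bullet}(\mathfrak U,\Cal S)$ to each of $\Gamma(X,\Cal A^{n,\bullet})$ and $\Gamma(X,\Cal C^{n,\bullet})$.

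I would construct these by the standard ``tic-tac-toe'' inside the double complexes $\check C^{\bullet}(\mathfrak U,\Cal A^{n,\bullet})$ and $\check C^{\bullet}(\mathfrak U,\Cal C^{n,\bullet})$ --- whose structure maps, together with the edge inclusions $\Gamma(X,\Cal A^{n,q})\hookrightarrow \check C^{0}(\mathfrak U,\Cal A^{n,q})$, $\check C^{p}(\mathfrak U,\Cal S)\hookrightarrow\check C^{p}(\mathfrak U,\Cal A^{n,0})$, the current counterparts, and the inclusion of smooth forms into currents, are all continuous. The diagram chase requires contracting homotopies for two families of local exact sequences, chosen continuous. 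In the Čech direction this is automatic from fineness: for a $C^\infty$ partition of unity $\{\rho_i\}$ subordinate to $\mathfrak U$, the formula $(h\eta)_{i_0\cdots i_{p-1}}=\sum_i\rho_i\,\eta_{i\,i_0\cdots i_{p-1}}$ gives a continuous contracting homotopy for the Čech coboundary, on smooth and on current cochains alike. In the $\overline{\partial}$ direction one needs the Dolbeault--Grothendieck lemma on the members and finite intersections of $\mathfrak U$ in \emph{topological form}: a continuous \emph{linear} contracting homotopy for $0\to\Cal S(U)\to\Cal A^{n,\bullet}(U)$ and its current analogue. For $U$ a polydisc this is the iterated one-variable construction --- cut off near a slightly smaller polydisc and convolve with the fundamental solution $\tfrac1{\pi z}$ of $\partial/\partial\overline{z}$, which is continuous and linear on smooth forms and on compactly supported distributional forms --- the shrinkage being absorbed by refining $\mathfrak U$, a cofinal operation that changes neither $H^{n,p}(X,\Cal L_{\lambda})$ nor its topology. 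Feeding these continuous homotopies through the chase produces the desired homotopy equivalences.

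I expect the genuinely delicate points to be (i) establishing the topological Dolbeault--Grothendieck lemma --- the existence of a \emph{continuous linear} solution operator for $\overline{\partial}$ --- on the finite intersections of the cover, reducing the Stein case to the polydisc case and handling the current version, and (ii) the bookkeeping needed to keep \emph{every} operator and homotopy produced by the tic-tac-toe, after all compositions with restrictions and partition-of-unity sums and after the refinement, continuous. Granting these, the functoriality of the second paragraph closes the argument; since that step is insensitive to whether the cohomology groups are Hausdorff, the conclusion holds for all $p$ at once.
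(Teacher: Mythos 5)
You should first note that the paper contains no proof of this statement to compare against: it is quoted directly from Laufer (Theorem 2.1 and Theorem 3.2 of \cite{Laufer67}), so your sketch has to stand entirely on its own. Your organizing principle is correct and genuinely useful: continuous chain maps together with continuous chain homotopies do induce mutually inverse continuous maps on the subquotient topologies, with no appeal to Hausdorffness or to the open mapping theorem, and the \v{C}ech-direction contraction built from a locally finite partition of unity is continuous on smooth cochains and on current cochains alike.

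The genuine gap is at the point you yourself flag as (i), and it is not merely a matter of bookkeeping. The tic-tac-toe, run so as to output a chain homotopy equivalence rather than just a quasi-isomorphism, requires a continuous \emph{linear} contracting homotopy for the augmented Dolbeault complex (and its current analogue) on each member and each finite intersection $U$ of one fixed Leray cover. The cut-off-and-convolve construction does not provide this: it produces an operator from $\bar{\partial}$-closed forms on $U$ to forms on a strictly smaller $U'$, so it is not a homotopy of the complex over $U$ at all, and the homotopy identity cannot even be written as an identity of endomorphisms of a single complex. The proposed repair, absorbing the shrinkage by refining the cover, does not close this hole: after refinement your operators map cochains for $\mathfrak{U}$ to cochains for the refinement, so you are comparing two different double complexes, and the assertion that passing to a refinement changes neither the cohomology nor its topology is itself a substantial part of what Laufer's Theorem 2.1 establishes (independence of the strong topology from the choice of Leray cover); as written you would be assuming a piece of the statement you are proving. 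Finally, whether a fixed Stein intersection admits a continuous linear solution operator for $\bar{\partial}$ \emph{without} loss of domain (equivalently, whether the relevant exact sequences of Fr\'echet spaces split topologically), and likewise in the non-metrizable current setting, is a nontrivial functional-analytic question that the classical Dolbeault--Grothendieck argument does not answer and your sketch does not address. Until those homotopies are actually produced on the intersections of a single cover, or the argument is restructured so that only solution operators with shrinkage are needed together with an independent proof of cover-independence, the proof is incomplete; for the purposes of this paper the statement should simply be cited from \cite{Laufer67}, as the authors do.
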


It is not {\it  a priori } clear that  the natural action of $G_\R$ (by left translation)  on $H^{n,p}(G_\R/L_\R, \Cal L_{\lambda})$  is continuous in the topology of Theorem \ref{Laufer}. The difficulty is that 
a topology on $H^{n,p}(G_\R/L_\R, \Cal L_{\lambda})$ is Hausdorff only if the operator $\bar{\partial}$ that defines
the cohomology space has the closed range property in that given topology.
This delicate issue was settled by Wong  in \cite{Wong95} and \cite{Wong99}.  
Wong proved that (a)  when $p = s = \text{dim} (K_\R/ (K_\R\cap L_\R)$   $H^{n,p}(G_\R/L_\R, \Cal L_{\lambda})$ is a non-zero continuous Fr\'echet
representation (by showing that the image
of $\bar{\partial}$ is closed in the $C^{\infty}$-topology on forms), (b)
 $H^{n,s}(G_\R/L_\R, \Cal L_{\lambda})_{K-\text{finite}}$ is a cohomologically induced  $(\fg, K)$-module,
 (c) $H^{n,s}(G_\R/L_\R, \Cal L_{\lambda})$ is the maximal globalization of its underlying 
 Harish-Chandra module in the sense of \cite{Schmid85}.
 
 \begin{Rem} It is important to note that the results in \cite{Wong95} and \cite{Wong99} are more general
 than those stated above. On the one hand, \cite{Wong99} allows the inducing bundle  to be infinite dimensional. On the other hand,  the conditions on the line bundle in \cite{Wong95}
 are less restrictive that the ones used here. Indeed, for a fixed  positive system $\Delta^+(\fg, \fh)$ 
 that contains $\Delta(\fu)$ the main theorem in \cite{Wong95} holds for   a one-dimensional representation $\chi_{\nu}$
 of $L_R$ with  weight $\nu$, 
with  $\langle \nu +\rho , \alpha \rangle > 0$ for roots $\alpha$ 
in $\fu$ (and  $\rho$
equal to  the half the sum of  positive roots).  In this paper  we assume that
 \begin{equation}\label{condition} 
 \langle \lambda  , \alpha \rangle > 0  \text{ for all root } \alpha\in \Delta(\fu).
 \end{equation}
 We impose  this, more restrictive, assumption on $\lambda$
 in order to have control on the $K$-type structure of 
 $H^{n,s}(G_\R/L_\R, \Cal L_{\lambda})_{K-\text{finite}}$, see \cite{VZucker84}.
\end{Rem}

We keep the notation $\fq = \fl \oplus \fu$ and write $\fu = \fu\cap \fp \oplus \fu \cap\fk$.
We let $\Delta(\fu\cap\fp)$ stand for the set of weights in $\fu\cap\fp$ with respect to $\fh$
and we write $\rho(\fu\cap\fp)$ for half the sum of the positive members of $(\fu\cap\fp)$ with respect to $\fh$.

\begin{Thm}  \label{Wong}
Suppose that $\fq = \fl \oplus \fu$ is a $\gt$-stable parabolic subalgebra of $\fg$ and
let $\fh = \ft \oplus \fa \subset \fl$ be a Cartan subalgebra. Let $\lambda \in \ft^*$ be an integral weight and
assume that
\begin{equation*}
\langle \lambda, \alpha \rangle > 0, \text{ for all root } \alpha \in \Delta(\fu).
\end{equation*}
Identify the elliptic co-adjoint orbit $G_\R \cdot \lambda$ with the homogeneous space
$G_\R/L_\R$. Endow $G_\R/L_\R$ with the complex structure  so that  the antiholomorphic tangent space
at the identity is identified with $\fu$.  Let $s = \text{dim}_\C(K_\R/(L_\R\cap K_\R))$. Then,
\begin{enumerate}
\item The strong topology of Theorem \ref{Laufer} on $H^{n,s}(G_\R/L_\R, \Cal L_{\lambda})$
is Hausdorff. In particular, the $\bar{\partial}$ Dolbeault operator has closed range.
\item $H^{n,p}(G_\R/L_\R, \Cal L_{\lambda}) = 0$, unless $p = s$.
\item The continuous representation of $G_\R$ on  $H^{n,s}(G_\R/L_\R, \Cal L_{\lambda})$ is irreducible
and Hermitian.
It is the maximal globalization of the underlying $(\fg, K)$-module.
\item $(\pi_{\lambda}, H^{n,s}(G_\R/L_\R, \Cal L_{\lambda}))$ contains with multiplicity one,
the $K$-type with highest weight 
\begin{equation*}
\mu = \lambda + 2 \rho(\fu \cap \fp).
\end{equation*}
If $\mu'$ is the highest weight of a $K$-type occurring in $\pi_{\lambda}\vert_K$, then
$\mu'$ is of the form
\begin{equation*}
\mu' = \mu + \sum n_{\alpha} \, \alpha, \text{ with } n_{\alpha} \in \mathbb{N}  \text{   and } \alpha \in \Delta(\fu\cap\fp).
\end{equation*}
\end{enumerate}
\end{Thm}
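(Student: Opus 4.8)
The theorem assembles the main results of \cite{Wong95} --- the closed-range property, the vanishing, and the globalization statements --- with the Vogan--Zuckerman theory of cohomologically induced modules \cite{VZucker84}, which supplies the algebraic content of (3) and all of (4). The plan is to reduce the Dolbeault complex on $G_\R/L_\R$ to Dolbeault cohomology on a compact flag manifold, and then to feed in these two inputs.

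The first step is a geometric reduction to the maximal compact subvariety. Since $\fq$ is $\theta$-stable, $\fq\cap\fk$ is a parabolic subalgebra of $\fk$, so $Y := K_\R/(L_\R\cap K_\R)$ is a closed complex submanifold of $G_\R/L_\R$, namely a generalized flag manifold for $K$. The Mostow decompositions $G_\R = K_\R\exp(\fp_\R)$ and $L_\R = (L_\R\cap K_\R)\exp(\fl_\R\cap\fp_\R)$ exhibit the projection $\pi\colon G_\R/L_\R\to Y$ as a $K_\R$-equivariant holomorphic vector bundle with fibers copies of $\C^{\,n-s}$ and $\dim_\C Y = s$. I would then push the complex $(\Omega^{n,\bullet}(\Cal L_\lambda),\bar\partial)$ forward along $\pi$: filtering by the number of base-direction differentials, the associated graded differential is the fiberwise $\bar\partial$, whose cohomology (the fibers being affine space, hence Stein) is concentrated in degree $0$, where Taylor expansion along the fibers identifies it with a holomorphic Fr\'echet bundle $\Cal W$ on $Y$ that splits, by degree in the fiber coordinates, into a sum of finite-rank homogeneous bundles twisted by the fiberwise canonical bundle and by $\Cal L_\lambda$. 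Provided this pushforward can be carried out compatibly with the strong topologies of Theorem \ref{Laufer} --- so that the Leray spectral sequence degenerates at the level of topological vector spaces --- one obtains $H^{n,p}(G_\R/L_\R,\Cal L_\lambda)\cong H^p(Y,\Cal W)$.

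The remaining assertions then follow. Because $Y$ is compact, $\bar\partial$ on $Y$ is elliptic with closed range and $H^p(Y,\Cal W)$ is a Hausdorff Fr\'echet space; transporting this back yields (1). For (2) I would apply the Borel--Weil--Bott theorem on $Y$ to each finite-rank summand of $\Cal W$; the hypothesis $\langle\lambda,\alpha\rangle>0$ for $\alpha\in\Delta(\fu)$ translates, after the canonical-bundle twist, into the antidominant-regularity condition that places the cohomology of every summand in the single degree $p=s$, so $H^{n,p}=0$ for $p\neq s$ (and the $p=s$ terms identify the $K$-types, giving another route to (4)). For (3): restricting to $K_\R$-finite forms identifies the pushed-down complex with the standard relative Lie algebra cohomology complex computing the cohomologically induced module, so $H^{n,s}(G_\R/L_\R,\Cal L_\lambda)_{K\text{-finite}} = A_\fq(\lambda)$, normalized so that its bottom-layer $K$-type is $\mu = \lambda + 2\rho(\fu\cap\fp)$; condition \eqref{condition} puts $\lambda$ in the good range, so by \cite{VZucker84} this module is nonzero, irreducible and unitarizable. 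Irreducibility of the Fr\'echet representation then follows from that of the underlying Harish-Chandra module, since a proper closed $G_\R$-invariant subspace would be the maximal globalization of a proper submodule; that $H^{n,s}(G_\R/L_\R,\Cal L_\lambda)$ \emph{is} the maximal globalization is Schmid's theorem \cite{Schmid85} for the measurable open $G_\R$-orbit $G_\R/L_\R\subset G/\Cal Q$, its continuous dual being identified via Serre duality with the minimal (compactly supported) globalization; and the representation is Hermitian because the invariant Hermitian form furnished by unitarity of $A_\fq(\lambda)$ globalizes to the continuous $G_\R$-pairing of $H^{n,s}(G_\R/L_\R,\Cal L_\lambda)$ with its Hermitian dual. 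Finally (4) is precisely the Vogan--Zuckerman $K$-type theorem \cite{VZucker84}: multiplicity one for $\mu = \lambda + 2\rho(\fu\cap\fp)$, and every $K$-type of the form $\mu' = \mu + \sum_{\alpha\in\Delta(\fu\cap\fp)} n_\alpha\,\alpha$ with $n_\alpha\in\mathbb{N}$, in agreement with the fiber-coordinate grading of $\Cal W$.

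The step I expect to be the real obstacle is the topological pushforward above --- equivalently, the closed-range property itself, which is the analytic heart of \cite{Wong95}. One cannot simply invoke ``the fibers are Stein'': that statement concerns individual fibers, whereas degenerating the Leray spectral sequence as topological vector spaces requires a parametrized Dolbeault lemma on $\C^{\,n-s}$ with estimates uniform over the compact base $Y$, together with enough control of the resulting Fr\'echet topologies that the $G_\R$-action remains continuous. Everything else is either a citation (Borel--Weil--Bott, Vogan--Zuckerman, Schmid) or formal.
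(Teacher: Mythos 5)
The paper does not reprove this theorem: its proof is a citation, with part (1) attributed to \cite{Wong95}, parts (2)--(3) to \cite{Vogan84} (restated in Dolbeault language, with the maximal-globalization statement coming from \cite{Wong95}, \cite{Schmid85}), and part (4) to Theorem 5.3 of \cite{VZucker84}. To the extent that you also ultimately cite these sources for the hard inputs, your conclusion matches the paper. But your attempt to supply an actual argument for (1)--(2) contains a genuine flaw beyond the gap you flag. The Mostow decomposition does \emph{not} exhibit $\pi\colon G_\R/L_\R\to Y=K_\R/(L_\R\cap K_\R)$ as a holomorphic vector bundle with fibers $\C^{\,n-s}$: already for $G_\R=SU(1,1)$, $L_\R=T_\R$, the domain $G_\R/L_\R$ is the unit disc and $Y$ is a point, so the ``fiber'' is a bounded domain, not $\C$; in higher rank the Mostow fibration is merely real-analytic and in general admits no holomorphic structure at all. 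Consequently the proposed Leray/pushforward reduction to $H^p(Y,\Cal W)$, and with it the appeal to Hodge theory and Borel--Weil--Bott on $Y$ for (1) and (2), does not get off the ground as stated. The mechanism actually used by Schmid and Wong is not a holomorphic fibration but the \emph{formal neighborhood} of $Y$: one compares the Dolbeault cohomology of $G_\R/L_\R$ with the cohomology of $Y$ with coefficients twisted by symmetric powers of the conormal bundle, which is precisely the vanishing condition (\ref{vanishing}) invoked later in the proof of Theorem \ref{Penrose} via (\cite{Wong95}, Corollary 38) and (\cite{Griffiths69}, Theorem G), and the closed-range property in the $C^\infty$ topology is established by separate hard analysis in \cite{Wong95} (building on \cite{Schmid75}), not by transporting Hausdorffness from the compact base.

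You do correctly identify that the topological degeneration/closed-range step is the analytic heart and cannot be waved through by ``the fibers are Stein,'' and your treatment of (3) and (4) --- good range, irreducibility and unitarizability from \cite{Vogan84}, \cite{VZucker84}, maximal globalization from \cite{Schmid85}/\cite{Wong95}, and the bottom-layer $K$-type statement from \cite{VZucker84} --- is exactly the paper's (purely citational) route. So the honest summary is: for (3)--(4) your proposal coincides with the paper; for (1)--(2) the sketch you propose in place of the citation rests on a fibration that does not exist holomorphically, and repairing it forces you back to the formal-neighborhood argument and the closed-range theorem of \cite{Wong95}, i.e.\ back to the citations the paper gives.
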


\begin{proof} Part (1) is proved in \cite{Wong95}. Parts (2) and (3) are results in  \cite{Vogan84},
written here in the language of Dolbeault cohomology. Part (4) is Theorem 5.3 in \cite{VZucker84}.
\end{proof}

\subsection{The kernel of Schmid's $\Cal D$-differential operator}

The maximal globalization of $A_{\fq}(\lambda)$ can be described
as the solution space of an elliptic operator acting on the space of smooth sections of a bundle
over $G_\R/K_\R$.
Let $(\tau_{\mu}, V_{\mu})$ be the minimal $K$-type occurring in  
$H^{n,s}(G_\R/L_\R, \Cal L_{\lambda})_{\text{$K$-finite}}$.
The relevant  vector bundle over $G_\R/K_\R$ is the one  induced by $V_{\mu}$. Indeed,
such realization plays a key role in proving that the   Dolbeault cohomology 
endowed with the topology of Theorem \ref{Laufer} is Hausdorff  (\cite{Schmid67} and \cite{Wong95}). This alternative realization of cohomologically
induced modules will  be important in our work.
We start by recalling   that  $ \mu =\lambda + 2 \rho(\fu \cap \fp)$ and that the highest weights of the
irreducible $K$-modules  occuring in $V_{\mu}\otimes \fp$
are of the form $\mu + \alpha$ with $\alpha \in \Delta(\fp, \ft)$. Following \cite{Wong95} we introduce the following definition.

\begin{Def} Let 
\begin{equation*}
V_{\mu}^- = \sum_{\beta \in \Delta(\fu \cap \fp)}\tau_{\mu-\beta} \subset V_{\mu}\otimes\fp
\end{equation*}
and let
$\mathbb P : V_{\mu}\otimes\fp \to V_{\mu}^-$ be the canonical projection. Choose
$\{ X_i\}$ an orthonormal basis of $\fp$ with respect to $( U, V) = - B( U, \bar{\gt V})$. For
$F \in C^{\infty} (G_\R/K_\R, V_{\mu})$ define
\begin{equation}\label{schmid}
\Cal D F (g ) = \sum_{i} \mathbb P \big[ (X_i F) ( g) \otimes {\bar{X}_i} \big].
\end{equation}
\end{Def}

\begin{Prop} (\cite{Wong95}, Proposition 49)
$\Cal D$ is well defined (i.e. independent of the choice of basis). If
$\lambda$ is sufficiently positive, $\Cal D$ is an elliptic operator.
\end{Prop}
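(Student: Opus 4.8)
The two assertions are of rather different character: well-definedness is a change-of-basis computation, while ellipticity reduces to a statement about how $V_{\mu}^{-}$ sits inside $V_{\mu}\otimes\fp$ as a $K$-module, and it is there that the positivity of $\lambda$ enters.

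For well-definedness, the point is that $\sum_i(X_iF)(g)\otimes\bar{X}_i$ is unchanged under a unitary change of orthonormal basis of $\fp$. If $X_j'=\sum_i a_{ij}X_i$, orthonormality of both bases with respect to $(U,V)=-B(U,\overline{\gt V})$ forces the matrix $(a_{ij})$ to be unitary; since $X\mapsto\bar{X}$ is conjugate-linear while $F\mapsto XF$ is complex-linear,
\begin{equation*}
\sum_j(X_j'F)\otimes\bar{X}_j'=\sum_{i,k}\Bigl(\sum_j a_{ij}\overline{a_{kj}}\Bigr)(X_iF)\otimes\bar{X}_k=\sum_i(X_iF)\otimes\bar{X}_i,
\end{equation*}
and applying $\mathbb{P}$ yields $\Cal DF$ independently of the basis. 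Invariantly: the $\fp$-gradient of $F$ (from the reductive splitting $\fg_\R=\fk_\R\oplus\fp_\R$) is a basis-free section of the bundle over $G_\R/K_\R$ induced by $V_{\mu}\otimes\fp^{*}$; the Hermitian form on $\fp$ gives a canonical conjugate-linear identification $\fp^{*}\cong\overline{\fp}=\fp$; and $\Cal D$ is $\mathbb{P}$ composed with the gradient and this identification.

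For ellipticity I would compute the principal symbol. Since $\Cal D$ is first order and $G_\R$-invariant, the symbol is $G_\R$-equivariant, so it is enough to check it is injective at the base point $eK_\R$, where the cotangent space is $\fp_\R^{*}$. Using the Killing form (positive definite on $\fp_\R$) to identify $\fp_\R^{*}\cong\fp_\R$ and extending $\C$-linearly, a nonzero cotangent vector $\xi$ corresponds to a nonzero $Z\in\fp_\R$, and one checks — using that $\{X_i\}$ and $\{\bar{X}_i\}$ are both orthonormal — that $\sum_i\langle\xi,X_i\rangle\,\bar{X}_i=Z$, so the symbol is, up to the usual scalar, $\sigma_Z\colon V_{\mu}\to V_{\mu}^{-}$, $\sigma_Z(v)=\mathbb{P}(v\otimes Z)$. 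Writing $V_{\mu}^{+}:=\ker\mathbb{P}$ for the orthogonal complement of $V_{\mu}^{-}$ in $V_{\mu}\otimes\fp$, ellipticity (injectivity of the symbol) is thus equivalent to
\begin{equation*}
(V_{\mu}\otimes\C Z)\cap V_{\mu}^{+}=0\qquad\text{for all }Z\in\fp_\R\setminus\{0\}.
\end{equation*}

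Verifying this is the heart of the matter and the step I expect to be the main obstacle. As recalled above, the $K$-constituents of $V_{\mu}\otimes\fp$ have highest weights $\mu+\alpha$ with $\alpha\in\Delta(\fp,\ft)$, and $V_{\mu}^{-}=\bigoplus_{\beta\in\Delta(\fu\cap\fp)}\tau_{\mu-\beta}$ collects exactly those with $\alpha=-\beta\in\Delta(\overline{\fu}\cap\fp)$. When $\lambda$, hence $\mu=\lambda+2\rho(\fu\cap\fp)$, is sufficiently positive, every $\mu-\beta$ is $\fk$-dominant and $\tau_{\mu-\beta}$ occurs in $V_{\mu}\otimes\fp$ with the expected multiplicity, and one must show that the family $\{\tau_{\mu-\beta}\}$ is then already large enough to ``see'' every nonzero $v\otimes Z$ with $Z$ real. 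The subtlety — and the reason one can drop neither the positivity hypothesis nor the restriction to real cotangent vectors — is that for a complex $Z\in\fu\cap\fp$ one genuinely has $v_{\mu}\otimes Z\in V_{\mu}^{+}$, since $v_{\mu}\otimes Z$ has weight $\mu+\beta$, strictly higher than every weight occurring in $V_{\mu}^{-}$; but $\fu\cap\fp$ and $\overline{\fu}\cap\fp$ meet $\fp_\R$ only in $0$, so a real $Z$ with nonzero $\fu\cap\fp$-component necessarily also has a nonzero $\overline{\fu}\cap\fp$-component, whose contribution to $v\otimes Z$ survives $\mathbb{P}$. Making this precise in general — organizing $V_{\mu}\otimes\fp$ along the decomposition $\fp=(\fu\cap\fp)\oplus(\fl\cap\fp)\oplus(\overline{\fu}\cap\fp)$, following highest weight vectors, and quantifying how positive $\lambda$ must be (consistently with the positivity already needed for the $K$-type control of \cite{VZucker84}) — is the real work; the symbol computation and the well-definedness are routine by comparison.
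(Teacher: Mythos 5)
Your well-definedness computation is correct and standard: two orthonormal bases of $\fp$ for the Hermitian form $(U,V)=-B(U,\bar{\gt V})$ differ by a unitary matrix, and since $F\mapsto X_iF$ is linear while $X\mapsto\bar{X}$ is conjugate linear, the sum $\sum_i (X_iF)\otimes\bar{X}_i$ is basis independent before $\mathbb P$ is even applied. Your reduction of ellipticity to injectivity of the symbol $\sigma_Z\colon v\mapsto\mathbb P(v\otimes Z)$ for $0\neq Z\in\fp_\R$ is also the right formulation. Keep in mind, though, that the paper itself offers no proof of this proposition: it is quoted from (\cite{Wong95}, Proposition 49), so the only substantive content a self-contained proof would have to supply is exactly the ellipticity assertion.

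And that is the step your proposal does not contain. You reduce ellipticity to $(V_{\mu}\otimes\C Z)\cap\ker\mathbb P=0$ for every nonzero real $Z$, observe that such a $Z$ with nonzero $\fu\cap\fp$-component must also have a nonzero $\bar{\fu}\cap\fp$-component, and then assert that this component's contribution to $v\otimes Z$ ``survives $\mathbb P$,'' explicitly deferring the justification. That deferred point is the entire difficulty, and the heuristic as stated is not an argument: for an arbitrary $v\in V_{\mu}$ (not the highest weight vector) it is not evident that $\mathbb P(v\otimes Z_-)\neq0$, nor that it cannot be cancelled by $\mathbb P\big(v\otimes(Z_0+Z_+)\big)$, since each constituent $\tau_{\mu-\beta}$ of $V_{\mu}^-$ receives contributions from all three pieces of $\fp=(\fu\cap\fp)\oplus(\fl\cap\fp)\oplus(\bar{\fu}\cap\fp)$ once $v$ is not extreme; moreover ``sufficiently positive'' is never quantified, although the statement turns on it. To close the gap you would need either Wong's own computation or a standard substitute, e.g. a Bochner/Parthasarathy-type comparison of $\Cal D^{*}\Cal D$ with a Casimir Laplacian whose zeroth-order correction is controlled when $\mu=\lambda+2\rho(\fu\cap\fp)$ is far from the walls, or a direct lower bound of the form $\sum_{\beta\in\Delta(\fu\cap\fp)}\|\mathbb P_{\mu-\beta}(v\otimes Z)\|^{2}\geq c_{\mu}\,\|v\|^{2}\|Z\|^{2}$ for real $Z$, with $c_{\mu}>0$ for $\lambda$ dominant enough. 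As written, the proposal establishes only the routine half of the proposition.
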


Observe that  $C^{\infty} (G_\R/K_\R, V_{\mu})$, endowed with the topology of uniform convergence
over compact subsets of functions and their derivatives, is a Fr\'echet space.
The operator $\Cal D : C^{\infty} (G_\R/K_\R, V_{\mu})\to C^{\infty} (G_\R/K_\R, V^-_{\mu})$ is continuous.
Hence, the  kernel space $\mathop{Ker} \Cal D$ is closed in $C^{\infty} (G_\R/K_\R, V_{\mu})$ and it inherits the structure
of Fr\'echet space.  In order to emphasize the space on which $\Cal D$ acts we write
\begin{equation}\label{ref}
\mathop{Ker} \Cal D = C^{\infty}_{\Cal D} (G_\R/K_\R, V_{\mu}). 
\end{equation}

\begin{Thm}  \label{Penrose}  Keep the assumptions on Theorem \ref{Wong}.
If $\langle \lambda - 2 \rho(\fl\cap\fk), \alpha\rangle > 0$ for all root
$\alpha \in \Delta(\fu)$, then there exists a  $G_\R$-equivariant map
\begin{equation*}
{\Cal P} : H^{n,s}(G_\R/L_\R, \Cal L_{\lambda})\to  C^{\infty}_{\Cal D} (G_\R/K_\R, V_{\mu}). 
\end{equation*}
The map $\Cal P$ is an homeomorphism of topological spaces.
\end{Thm}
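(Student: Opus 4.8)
The plan is to realize $\Cal P$ as the (real) Penrose transform attached to the $G_\R$-equivariant double fibration
\[
G_\R/K_\R \ \xleftarrow{\ \eta\ }\ G_\R/(L_\R\cap K_\R)\ \xrightarrow{\ \tau\ }\ G_\R/L_\R ,
\]
whose left-hand fibre is the compact complex submanifold $Z := K_\R/(L_\R\cap K_\R)\hookrightarrow G_\R/L_\R$ of complex dimension $s$ described in Section \ref{sec: DC}. Given a $\bar{\partial}$-closed $C^{\infty}$ representative $\omega$ of a class in $H^{n,s}(G_\R/L_\R,\Cal L_{\lambda})$, one pulls $\omega$ back along $\tau$ and then takes the fibrewise Dolbeault cohomology in degree $s$ along $\eta$ (the direct image $R^{s}\eta_{*}$, legitimate because the fibre $Z$ is compact). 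Bott--Borel--Weil applied to $Z$ identifies the resulting bundle over $G_\R/K_\R$ with the one induced from the minimal $K$-type $(\tau_{\mu},V_{\mu})$, $\mu=\lambda+2\rho(\fu\cap\fp)$, the shift $2\rho(\fu\cap\fp)$ coming from the holomorphic conormal bundle of $Z$ in $G_\R/L_\R$; a direct computation — worked out in [\cite{Wong95}, Section 7], [\cite{BKZ}, Section 10] and \cite{B95} — then shows that $\bar{\partial}$-closedness of $\omega$ forces $\Cal D\big(R^{s}\eta_{*}\tau^{*}\omega\big)=0$, so the transform takes values in $C^{\infty}_{\Cal D}(G_\R/K_\R,V_{\mu})$. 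It is precisely here that the extra hypothesis $\langle\lambda-2\rho(\fl\cap\fk),\alpha\rangle>0$ for $\alpha\in\Delta(\fu)$ enters: it guarantees the vanishing of the fibre cohomology in the degrees that would otherwise obstruct the transform, and (together with the standing positivity) that $\Cal D$ is elliptic. Equivariance of $\Cal P$ follows from $G_\R$-equivariance of the double fibration; continuity follows because $\tau^{*}$ and the fibrewise push-forward are continuous for the $C^{\infty}$ topology on forms, and because by Theorems \ref{Laufer} and \ref{Wong} the quotient defining $H^{n,s}(G_\R/L_\R,\Cal L_{\lambda})$ is Hausdorff, so the induced map into the closed subspace $C^{\infty}_{\Cal D}(G_\R/K_\R,V_{\mu})\subset C^{\infty}(G_\R/K_\R,V_{\mu})$ is continuous.

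Next I would show that $\Cal P$ restricts to an isomorphism on $K$-finite vectors. Being $G_\R$-equivariant and continuous, $\Cal P$ is $\fg$- and $K$-equivariant on smooth vectors, so it carries $H^{n,s}(G_\R/L_\R,\Cal L_{\lambda})_{K\text{-finite}}$ into $C^{\infty}_{\Cal D}(G_\R/K_\R,V_{\mu})_{K\text{-finite}}$. By Theorem \ref{Wong}(3) the source $K$-finite module is the irreducible module $A_{\fq}(\lambda)$, and by [\cite{Wong95}, Corollary 38] so is the target. The map is nonzero because it is an isomorphism on the bottom layer: $V_{\mu}$ occurs with multiplicity one in $H^{n,s}(G_\R/L_\R,\Cal L_{\lambda})$ (Theorem \ref{Wong}(4)), and the Penrose transform of its explicit representative is, via the Bott--Borel--Weil identification on $Z$, a nonzero element of $C^{\infty}_{\Cal D}(G_\R/K_\R,V_{\mu})$ generating a copy of $V_{\mu}$. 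Schur's lemma then makes $\Cal P|_{K\text{-finite}}$ a $(\fg,K)$-isomorphism.

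To upgrade this to a homeomorphism I would invoke the uniqueness of the maximal globalization. The space $C^{\infty}_{\Cal D}(G_\R/K_\R,V_{\mu})$, being a closed $G_\R$-stable subspace of a Fr\'echet space of $C^{\infty}$ sections, is a smooth Fr\'echet $G_\R$-representation; ellipticity and $G_\R$-invariance of $\Cal D$ make it admissible and of moderate growth, with underlying Harish-Chandra module $A_{\fq}(\lambda)$. By Theorem \ref{Wong}(3), $H^{n,s}(G_\R/L_\R,\Cal L_{\lambda})$ is the maximal globalization of $A_{\fq}(\lambda)$ in the sense of \cite{Schmid85}; hence, by the defining maximality property, the identity map of $A_{\fq}(\lambda)$ extends to a continuous $G_\R$-map $\Cal P':C^{\infty}_{\Cal D}(G_\R/K_\R,V_{\mu})\to H^{n,s}(G_\R/L_\R,\Cal L_{\lambda})$. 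The compositions $\Cal P'\circ\Cal P$ and $\Cal P\circ\Cal P'$ are then continuous $G_\R$-endomorphisms which, by the previous paragraph, restrict to the identity on the respective spaces of $K$-finite vectors; since those are dense in an admissible smooth Fr\'echet representation, both compositions are the identity. Therefore $\Cal P$ is a homeomorphism with inverse $\Cal P'$.

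I expect the main obstacle to be the explicit verification in the first step — that the fibrewise push-forward $R^{s}\eta_{*}\tau^{*}$ genuinely lands in $\mathop{Ker}\Cal D$ and is nonzero on the bottom layer — which amounts to matching the two positivity hypotheses with the Bott--Borel--Weil computation on the compact fibre $Z$, and is where the cited computations of \cite{Wong95}, \cite{BKZ} and \cite{B95} carry the real weight. A secondary point requiring care is checking that $C^{\infty}_{\Cal D}(G_\R/K_\R,V_{\mu})$ has all the properties (admissibility, moderate growth, and the correct Harish-Chandra module) needed to invoke the maximality property; once these are in place, the reduction of the topological assertion to the uniqueness of the maximal globalization is formal.
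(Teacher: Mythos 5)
Your outline follows the same general route as the paper only up to the point where the paper actually does its work, and it is precisely there that your argument has a gap. The paper does not re-derive the transform: it quotes \cite{Schmid75} for the equal-rank case and (\cite{Wong95}, Corollary 38 and the proof of Proposition 52), which give that $\Cal P$ is a topological isomorphism onto $\mathop{Ker}\Cal D$ \emph{conditionally} on the vanishing $H^i\big(K_\R/(L_\R\cap K_\R), \Cal L_{\lambda+2\rho(\fu)}\otimes(\Cal N^*)^{(k)}\big)=0$ for all $i<s$ and all $k\geq 0$, where $\Cal N$ is the normal bundle of $K_\R/(L_\R\cap K_\R)$ in $G_\R/L_\R$ and $(k)$ denotes the $k$-th symmetric power. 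The entire content of the proof is then the verification that the hypothesis $\langle\lambda-2\rho(\fl\cap\fk),\alpha\rangle>0$ for $\alpha\in\Delta(\fu)$ implies this vanishing, which the paper gets from Griffiths' Theorem G \cite{Griffiths69}. In your proposal this step appears only as the assertion that the hypothesis ``guarantees the vanishing of the fibre cohomology in the degrees that would otherwise obstruct the transform.'' You never identify the precise condition --- note that it involves \emph{all} symmetric powers of the conormal bundle, because the comparison of the Dolbeault cohomology with $\mathop{Ker}\Cal D$ goes through the filtration by the formal neighborhood of $K_\R/(L_\R\cap K_\R)$, not merely the fibre cohomology of $\Cal L_{\lambda}$ in degree $s$ --- nor do you indicate how the stated positivity yields it (Bott--Borel--Weil bookkeeping on the compact fibre, or, as in the paper, Griffiths' positivity theorem). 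Since the results you cite are conditional on exactly this vanishing, it cannot be delegated to them; it is the new content of the theorem.

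Your alternative closing argument (Schur's lemma on $K$-finite vectors plus the universal property of the maximal globalization) is also circular as written. To invoke maximality you need $C^{\infty}_{\Cal D}(G_\R/K_\R,V_{\mu})$ to be an admissible, finite-length globalization whose $K$-finite vectors are exactly $A_{\fq}(\lambda)$; your justification for both points is again (\cite{Wong95}, Corollary 38), which holds only under the same unverified vanishing condition. Admissibility of $\mathop{Ker}\Cal D$ is not automatic --- $C^{\infty}(G_\R/K_\R,V_{\mu})$ itself is far from admissible, and finiteness of the $K$-multiplicities of solutions of $\Cal D$ requires an argument --- and establishing that $\big(\mathop{Ker}\Cal D\big)_{K\text{-finite}}\simeq A_{\fq}(\lambda)$ independently of Wong's conditional theorem is essentially as hard as the theorem itself. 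Moreover, once the vanishing is actually checked from $\langle\lambda-2\rho(\fl\cap\fk),\alpha\rangle>0$, Wong's Corollary 38 already delivers the homeomorphism directly, so the maximal-globalization detour buys nothing. To repair your proof: state the vanishing condition above explicitly and deduce it from the hypothesis via \cite{Griffiths69} (or a Bott--Borel--Weil computation on $K_\R/(L_\R\cap K_\R)$), and then conclude by citing \cite{Wong95} as the paper does.
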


\begin{proof}

When  $\text{rank} (G_\R) = \text{rank}  (K_\R)$ and $L_\R = T_\R$ is a maximal compact torus
$\rho(\fl\cap\fk) = 0$ and the Theorem holds for all $\lambda$ satisfying the positivity condition
of Theorem \ref{Wong}. This result is proved in \cite{Schmid75}.
The general statement follows form  (\cite{Wong95}, Corollary 38 and the proof of Proposition 52).
Let $\Cal N$ be the normal  bundle of the compact submanifold
$K_\R/(L_\R\cap K_\R)$ in $G_\R/L_\R$  and let $(k)$ signify the $k$-th symmetric power.
By (\cite{Wong95}, Corollary 38) the Theorem  holds if
\begin{equation}\label{vanishing}
H^i (K_\R/(L_\R\cap K_\R), \Cal L_{\lambda+ 2 \rho(\fu)}\otimes (\Cal N^*)^{(k)} ) = 0
\text{ for all }  i < s \text{ and all } k\geq 0.
\end{equation}
The vanishing condition (\ref{vanishing}) follows from (\cite{Griffiths69}, Theorem G).
Our assumption on $\lambda$ guarantees that the hypothesis of  (\cite{Griffiths69}, Theorem G)  are satisfied.

\end{proof}

\begin{Rem} The map $\Cal P$ is the Real Penrose transform in (\cite{Schmid67}, Lemma 7.1)
 (\cite{Wong95}, Section 7), (\cite{BKZ}, \cite{B95}) and  (\cite{Zierau}, Lecture 7) .
  The transform 
$${\Cal P} : H^{n,s}(G_\R/L_\R, \Cal L_{\lambda})\to  C^{\infty}_{\Cal D} (G_\R/K_\R, V_{\mu})$$
is an integral transform; see \cite{BKZ}. If $\omega_c \in \wedge^s(\fu\cap\fk)^*$ is a normalized top form, then
\begin{equation}\label{pa}
\Cal P ( w) (x) = \int_{K/(L\cap K)} \tau_{\mu}(k) v_{\mu} \; \langle w (x k), 1_{\lambda}\otimes \omega_c\rangle\; dk,
\end{equation} 
with $v_{\mu}$ a normalized highest weight vector in $V_{\mu}$.

Under the positivity assumptions of Theorem \ref{Penrose}  the transform $\Cal P$ is injective onto
$\mathop{Ker} \Cal D$. Each $G \in \mathop{Ker} \Cal D$ determines a unique cohomology class $[\eta_G]$.
One way to determine a representative of  $[\eta_G]$ is to follow the recursive procedure described in
 (\cite{Schmid67}, Lemma 7.1) while keeping track of the required $\fl$-equivariant property.

\end{Rem}

%%%%%%%%%%%%%%%%%%%%%%%%%%%%%%%%%%%%%%%%%%%%%%%%

\section{The minimal globalization of $A_{\fq}(\lambda)$}

\subsection{Compactly supported cohomology} 
Theorem \ref{Wong}   identifies the maximal globalization of cohomologically-induced modules as
Dolbeault cohomology  representations.  In this section we summarize relevant information on
the minimal globalization of  $H^{n,s}(G_\R/L_\R, \Cal L_{\lambda})_{\text{$K$-finite}}$.
 
It is known that the minimal globalization
$H^{n,s}(G_\R/L_\R, \Cal L_{\lambda})_{\text{$K$-finite}}$ is the topological dual of its maximal globalization.
Since we know that $\bar{\partial}$ has the closed range property, Serre duality implies that the
minimal globalization occurs as compactly supported cohomology.
 As for Dolbeault cohomology, compactly supported cohomology can be calculated  in different ways.
In particular, such cohomology groups can be computed by
using Leray covers, $C^{\infty}$ compactly supported forms or compactly supported currents
(forms with compactly supported distribution coefficients). All these approaches yield the same cohomology group
as vector spaces. These cohomology spaces can be endowed with strong topologies
as described in \cite{Laufer67}. Theorem \ref{Laufer} holds for compactly supported cohomology.

\begin{Thm} (\cite{Laufer67}, Theorem 2.1).\label{Laufer2} The strong topologies on $H^{0,q}_c(G_\R,L_\R, \Cal L_{\lambda})$ with respect to
Leray covers, $C^{\infty}_c$-forms and compactly supported currents coincide.
\end{Thm}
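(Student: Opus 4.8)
The plan is to reduce Theorem~\ref{Laufer2} to the already-quoted Theorem~\ref{Laufer} (i.e.\ to the results of Laufer in \cite{Laufer67}) by invoking the fact that the compactly supported Dolbeault cohomology of the complex manifold $G_\R/L_\R$ with coefficients in $\Cal L_\lambda$ is an instance of the same general setup that Laufer treats, so that Theorem~2.1 of \cite{Laufer67} applies verbatim once the hypotheses are checked. Concretely, I would first recall that $G_\R/L_\R$ is an open submanifold of the flag variety $G/\Cal Q$, hence a (finite-dimensional, second countable, hence paracompact) complex manifold carrying a holomorphic line bundle $\Cal L_\lambda$; this is exactly the class of spaces for which \cite{Laufer67} establishes the comparison of strong topologies on Dolbeault cohomology computed from (a) Leray covers, (b) $C^\infty$-forms and (c) currents. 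The only substantive thing to point out is that the compactly supported version of each complex --- the \v{C}ech complex of a Leray cover with compact supports, the Dolbeault complex $\Omega^{0,\bullet}_c$, and the complex of compactly supported currents --- sits inside Laufer's framework just as the ordinary (non-compact-support) complexes do, because compact support is preserved by $\bar\partial$, by refinement maps of Leray covers, and by the natural inclusions of smooth forms into currents.

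The key steps, in order, would be: (1) fix a Leray cover $\Cal U$ of $G_\R/L_\R$ by Stein (or $\Cal L_\lambda$-acyclic) open sets, locally finite, and form the three compactly supported complexes whose cohomologies are the candidates in the statement; (2) observe that each such complex is a complex of $(LF)$- or $(DF)$-type topological vector spaces with continuous differentials, and that the standard comparison quasi-isomorphisms (\v{C}ech-to-Dolbeault via a partition of unity, Dolbeault-forms-to-currents via inclusion) are continuous in the strong topologies; (3) invoke Theorem~2.1 of \cite{Laufer67}, whose proof is a formal consequence of the open mapping theorem applied to these quasi-isomorphisms, to conclude that the induced maps on cohomology are topological isomorphisms --- but here one must be slightly careful, since the open mapping theorem needs the spaces involved to be Hausdorff, equivalently that the relevant $\bar\partial$ (or \v{C}ech) operator has closed range in the compactly supported setting; (4) finally note that, by Serre duality, the compactly supported cohomology $H^{0,q}_c(G_\R/L_\R,\Cal L_\lambda)$ is the topological dual of $H^{n,s}(G_\R/L_\R,\Cal L_{-\lambda})$ (after suitable twist), and Theorem~\ref{Wong}(1) already gives that the latter's $\bar\partial$ has closed range, so closed range transfers to the dual complex and all three compactly supported cohomologies are Hausdorff. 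Steps (3) and (4) are really the heart; everything else is bookkeeping.

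The main obstacle I anticipate is precisely the closed-range/Hausdorff issue in step (3)--(4): Laufer's comparison theorem, taken off the shelf, identifies the \emph{separated} (Hausdorff) strong topologies, and in the compactly supported setting one does not a priori know that $\bar\partial$ on compactly supported forms has closed range. The clean way around this is \emph{not} to reprove closed range directly on $\Omega^{0,\bullet}_c$ but to exploit duality: the compactly supported complex is, up to the conjugate-linear Serre pairing, the strong dual of the $C^\infty$ Dolbeault complex in complementary degree, whose closed range is Theorem~\ref{Wong}(1); a closed-range statement for a complex of reflexive Fr\'echet (resp.\ $DF$) spaces is inherited by the dual complex. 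Once Hausdorffness is in hand, the three topologies coincide by exactly the argument Laufer gives, and the final sentence of the statement follows. I would therefore organize the write-up as: a one-line reduction to \cite{Laufer67}, a remark that compact supports are compatible with all the comparison maps, and then a short paragraph deriving the needed closed-range property on the compactly supported side from Theorem~\ref{Wong}(1) via Serre duality.
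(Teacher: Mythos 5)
The statement you are asked to prove is, in the paper, a quotation rather than a proved result: it is Theorem 2.1 of \cite{Laufer67}, which applies to an arbitrary complex manifold with a holomorphic vector bundle and compares the strong (quotient) topologies on cohomology computed from Leray covers, smooth forms and currents, with or without compact supports, \emph{without} any separation hypothesis. Your steps (1)--(2) --- checking that $G_\R/L_\R$ is a complex manifold carrying the holomorphic line bundle $\Cal L_{\lambda}$, and that $\bar{\partial}$, refinement maps and the inclusion of forms into currents preserve compact supports --- are all that is needed, and that is in effect all the paper does. The ``main obstacle'' you identify in steps (3)--(4) is not an obstacle: Laufer's comparison is a statement about possibly non-Hausdorff topological vector space topologies, and no closed-range input enters it. This is visible in the paper's own logical order: Hausdorffness of $H^{0,n-s}_c$ is established only \emph{after} Theorem \ref{Laufer2}, in part (2) of the duality theorem that follows, precisely by combining Serre duality with Wong's closed-range result; if the topological comparison itself required Hausdorffness, that order would be circular. (The same pattern already occurs for ordinary cohomology: Theorem \ref{Laufer} is stated before, and independently of, Wong's closed-range theorem, as the paper's discussion between them makes explicit.)

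Moreover, the way you propose to supply closed range is itself problematic. The strong dual of the smooth-forms Dolbeault complex (a complex of nuclear Fr\'echet spaces) is the complex of \emph{compactly supported currents}, not the complex of $C^{\infty}_c$-forms (an LF space) nor the compactly supported \v{C}ech complex; so the duality-transfer argument from Theorem \ref{Wong}(1) yields, at best, Hausdorffness for one of the three models, and carrying it over to the other two would already use the topological comparison you are trying to prove --- a circularity internal to your own scheme. There is also a bookkeeping mismatch: Theorem \ref{Wong}(1) gives closed range for the $(n,\bullet)$-complex with coefficients $\Cal L_{\lambda}$ in degree $s$ only, whereas the statement concerns $H^{0,q}_c$ for all $q$, with the dual twist on the coefficients. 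The repair is simply to delete steps (3)--(4): after your steps (1)--(2), cite \cite{Laufer67}, Theorem 2.1, as the paper does, and leave the Hausdorffness question to the subsequent duality theorem where it belongs.
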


\begin{Thm} In the setting of Theorem \ref{Wong} and 
Theorem \ref{Laufer2},  write $\Cal L_{\lambda}^*$ for the bundle dual to $\Cal L_{\lambda}$.

\begin{enumerate}
\item There is a natural topological isomorphism 
\begin{equation*}
H_c^{0,n-s}(G_\R/L_\R, \Cal L_{\lambda}^*)^* \simeq H^{n,s}(G_\R/L_\R, \Cal L_{\lambda}).
\end{equation*}
\item The topological space $H_c^{0,n-s}(G_\R/L_\R, \Cal L_{\lambda}^*)$ is Hausdorff.
\end{enumerate}
\end{Thm}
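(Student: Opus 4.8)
The plan is to deduce both assertions from Serre duality together with the already-established properties of the $\bar\partial$-operator on $G_\R/L_\R$. The manifold $X := G_\R/L_\R$ is a complex manifold of dimension $n$, and Theorem \ref{Wong}(1) tells us that on the space of $(n,s)$-forms valued in $\Cal L_\lambda$ the operator $\bar\partial$ has closed range in the $C^\infty$-topology; by Serre's theorem on duality the transpose operator acting on compactly supported currents of complementary bidegree inherits the closed range property as well. Concretely, one identifies the topological dual of the space of smooth $(n,s)$-forms with values in $\Cal L_\lambda$ with the space of compactly supported currents of bidegree $(0,n-s)$ with values in the dual bundle $\Cal L_\lambda^* = \Cal L_{-\lambda}$ tensored with the canonical bundle, which on $X$ is already accounted for by the shift from $(n,\bullet)$ to $(0,\bullet)$; the pairing is integration of the wedge product. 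This is the standard Serre duality pairing, valid precisely when $\bar\partial$ has closed range in both the source and target degrees.

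First I would make the functional-analytic setup precise: by Theorem \ref{Laufer} the Dolbeault cohomology $H^{n,s}(X,\Cal L_\lambda)$ is a Fr\'echet space (indeed, by Theorem \ref{Wong}(3), it is the maximal globalization), and by Theorem \ref{Laufer2} the compactly supported cohomology $H_c^{0,n-s}(X,\Cal L_\lambda^*)$ carries a well-defined strong topology computed indifferently from Leray covers, $C_c^\infty$-forms, or compactly supported currents. Next I would invoke Serre duality in the form given by Serre's original paper (or by Ramis--Ruget, or as recalled in \cite{Wong99}): since $\bar\partial$ has closed range on $(n,s)$-forms and on $(n,s+1)$-forms (the latter because $H^{n,s+1}(X,\Cal L_\lambda)=0$ by Theorem \ref{Wong}(2), which forces the range to be all of the kernel, hence closed), the natural pairing induces a topological isomorphism between $H_c^{0,n-s}(X,\Cal L_\lambda^*)$ and the strong dual of $H^{n,s}(X,\Cal L_\lambda)$. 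Dualizing once more and using that $H^{n,s}(X,\Cal L_\lambda)$ is Fr\'echet (hence reflexive once we know it is, e.g., a DF- or nuclear Fr\'echet space, which it is as a closed subspace of a space of smooth sections), one recovers the displayed isomorphism in part (1). This also shows the $G_\R$-action matches the contragredient action, so the right-hand side is the maximal globalization and the left-hand side is its dual, i.e. the minimal globalization.

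For part (2), the Hausdorffness of $H_c^{0,n-s}(X,\Cal L_\lambda^*)$, I would argue as follows: a cohomology space computed by a complex of topological vector spaces with a closed-range differential is automatically Hausdorff, being the quotient of a closed subspace (the kernel) by a closed subspace (the image). The closed range of the relevant compactly-supported $\bar\partial$ is exactly the statement dual, via Serre duality, to the closed range of $\bar\partial$ on $(n,s)$- and $(n,s+1)$-forms on $X$, which we have from Theorem \ref{Wong}; alternatively, it follows directly because the dual of the Hausdorff space $H^{n,s}(X,\Cal L_\lambda)$ must be Hausdorff in the strong topology, and part (1) identifies $H_c^{0,n-s}(X,\Cal L_\lambda^*)$ with that dual.

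The main obstacle is bookkeeping the functional-analytic hypotheses under which Serre duality applies: one must verify that the spaces of smooth forms and of compactly supported currents involved are of the right type (nuclear Fr\'echet and nuclear DF, respectively) so that the duality and the double-dual identification are legitimate, and one must confirm closed range in the \emph{two} adjacent degrees that Serre duality requires, not just the single degree $s$. The degree-$s$ closed range is Theorem \ref{Wong}(1); the degree-$(s+1)$ closed range is not stated verbatim but follows from the vanishing $H^{n,s+1}(X,\Cal L_\lambda)=0$ in Theorem \ref{Wong}(2), since a vanishing cohomology group forces the image of $\bar\partial$ to equal its kernel, which is closed. Once these points are checked, the rest is the formal machinery of Serre duality as in \cite{Laufer67} and \cite{Wong99}.
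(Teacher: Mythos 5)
Your proposal is correct and follows essentially the same route as the paper, which simply cites the topological Serre duality theorem of Laufer (Theorem 3.2), Serre's original paper, and Vogan's remarks; you have merely written out the hypothesis-checking (closed range in degree $s$ from Theorem \ref{Wong}(1), closed range in degree $s+1$ from the vanishing in Theorem \ref{Wong}(2), and reflexivity of the nuclear Fr\'echet cohomology space) that those references encapsulate. No genuinely different idea is involved, and the argument is sound.
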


\begin{proof} A more general version  of this duality theorem is proved in (\cite{Laufer67}, Theorem 3.2).
Also see \cite{Serre55} and the remarks in (\cite{Vogan08}, page 67).
\end{proof}

\begin{Thm}[\cite{Bratten97}, page 285; \cite{Vogan08}, Corollary 8.15] \label{compact} Keep the hypothesis  of Theorem \ref{Wong}.
\begin{enumerate}
\item $H_c^{0,q}(G_\R/L_\R, \Cal L_{\lambda}^*)= 0$, unless $q = n-s$.
\item $H_c^{0,n-s}(G_\R/L_\R, \Cal L_{\lambda}^*)$ is non-zero and it admits a continuous $G_\R$ action.
The resulting representation is irreducible. It is the minimal globalization of the underlying $(\fg, K)$-module.
\item $H_c^{0,n-s}(G_\R/L_\R, \Cal L_{\lambda}^*)$ admits an invariant Hermitian form.
\end{enumerate}
\end{Thm}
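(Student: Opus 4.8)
\textbf{Proof plan for Theorem \ref{compact}.}
The plan is to obtain all three assertions by transport of structure from the Dolbeault side, where the corresponding facts are already available through Theorem \ref{Wong}, together with the duality theorem stated just above. First I would record that, since $\bar\partial$ has closed range on $H^{n,p}(G_\R/L_\R,\Cal L_{\lambda})$ by Theorem \ref{Wong}(1), Serre duality applies in the strong $C^\infty$ topology and produces, for every degree $q$, a topological isomorphism $H_c^{0,q}(G_\R/L_\R,\Cal L_{\lambda}^*)^* \simeq H^{n,n-q}(G_\R/L_\R,\Cal L_{\lambda})$. Combined with Theorem \ref{Wong}(2), which asserts $H^{n,p}=0$ for $p\ne s$, the right-hand side vanishes unless $n-q=s$, i.e. unless $q=n-s$; since the continuous dual of a Hausdorff topological vector space detects nonvanishing, this gives part (1). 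For $q=n-s$ the same isomorphism identifies $H_c^{0,n-s}(G_\R/L_\R,\Cal L_{\lambda}^*)^*$ with the nonzero space $H^{n,s}(G_\R/L_\R,\Cal L_{\lambda})$, so $H_c^{0,n-s}$ is itself nonzero; this is the first half of part (2).

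For the remainder of part (2), the continuity of the $G_\R$-action on $H_c^{0,n-s}$ follows because $G_\R$ acts by left translation on compactly supported forms, and this action is continuous for the strong topology of Theorem \ref{Laufer2}; the closed-range statement for the compactly supported $\bar\partial$-complex (equivalently, the Hausdorffness in part (2) of the preceding theorem) is what makes the induced action on cohomology a genuine continuous representation on a Hausdorff space. That this representation is precisely the minimal globalization of the underlying Harish-Chandra module is the content of the Casselman--Wallach/Schmid duality between maximal and minimal globalization: the minimal globalization is the topological dual of the maximal one, and by part (1) of the preceding theorem $H_c^{0,n-s}(G_\R/L_\R,\Cal L_{\lambda}^*)$ is canonically the predual of $H^{n,s}(G_\R/L_\R,\Cal L_{\lambda})$, which Theorem \ref{Wong}(3) identifies as the maximal globalization. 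Irreducibility is inherited: the $(\fg,K)$-module is irreducible (being $A_{\fq}(\lambda)$), and the minimal globalization of an irreducible Harish-Chandra module is irreducible as a $G_\R$-representation, by Schmid.

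For part (3), the invariant Hermitian form comes from the one on the maximal globalization furnished by Theorem \ref{Wong}(3): a continuous $G_\R$-invariant Hermitian form on $H^{n,s}(G_\R/L_\R,\Cal L_{\lambda})$ is the same as a continuous $G_\R$-intertwining map from that space to its Hermitian dual, and dualizing (using that the minimal and maximal globalizations are mutually dual) yields a continuous $G_\R$-invariant Hermitian form on $H_c^{0,n-s}(G_\R/L_\R,\Cal L_{\lambda}^*)$. Concretely one pairs a compactly supported class against (the conjugate of) another via the Serre-duality pairing composed with the intertwiner; the only thing to check is that conjugate-linearity and $G_\R$-invariance are preserved, which is formal. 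The main obstacle, and the point where one must be careful rather than formal, is verifying the closed-range property for the compactly supported complex so that the relevant spaces are Hausdorff and Serre duality genuinely applies in the topological (not merely algebraic) category; but this has already been arranged by the citations to \cite{Wong95}, \cite{Wong99} and \cite{Laufer67} invoked in Theorem \ref{Wong} and the duality theorem above, so here it can simply be quoted.
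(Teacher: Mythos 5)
The paper itself offers no proof of Theorem \ref{compact}: it is quoted from Bratten and from Vogan's notes, so your sketch can only be measured against the standard derivation in those sources, which it largely reproduces (closed range for $\bar{\partial}$, Serre--Laufer duality, the minimal/maximal globalization duality). Two of your steps, however, would fail as written. For part (1) you assert a topological isomorphism $H_c^{0,q}(G_\R/L_\R,\Cal L_{\lambda}^*)^*\simeq H^{n,n-q}(G_\R/L_\R,\Cal L_{\lambda})$ \emph{in every degree} $q$ and then deduce the vanishing of $H_c^{0,q}$ from the vanishing of its dual, "since the dual of a Hausdorff space detects nonvanishing." That deduction needs $H_c^{0,q}$ to be Hausdorff for every $q$, but the results you may quote (Wong's closed-range theorem and the duality theorem preceding the statement) give Hausdorffness only in the single degree $n-s$ (resp.\ $s$); Hausdorffness in the remaining degrees is not "already arranged" and is essentially as hard as the vanishing you want. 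The way the cited literature actually argues is in the opposite direction: the compactly supported currents computing $H_c^{0,\bullet}$ form the transposed complex of the Fr\'echet complex of smooth $(n,\bullet)$-forms; the latter has closed range in every degree (trivially in the acyclic degrees $p\neq s$, where the image equals the kernel, and by Wong in degree $s$), so Serre's closed-range lemma identifies $H_c^{0,q}$ with the dual of $H^{n,n-q}$ as a vector space in every degree, giving (1); Laufer's theorem then transfers the statement from currents to compactly supported smooth forms. Your argument should be reorganized along these lines rather than resting on Hausdorffness of $H_c^{0,q}$ for $q\neq n-s$.

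For part (3) you start from "a continuous $G_\R$-invariant Hermitian form on $H^{n,s}(G_\R/L_\R,\Cal L_{\lambda})$," i.e.\ on the maximal globalization, and propose to dualize it. No such separately continuous form exists when the representation is infinite dimensional: it would yield a continuous intertwining map from the maximal globalization to its Hermitian dual, which is a \emph{minimal} globalization; composing with the canonical continuous map from the minimal into the maximal globalization and using Schur's lemma on $K$-finite vectors, one would conclude that the minimal and maximal globalizations coincide topologically, which is impossible (a nuclear Fr\'echet space versus a DNF space of infinite dimension). The word "Hermitian" in Theorem \ref{Wong}(3) refers to the underlying Harish-Chandra module. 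The correct, and standard, route is the one the paper exploits later in Theorem \ref{hdual}(3): the invariant form on $A_{\fq}(\lambda)$ is an intertwining map from the Harish-Chandra module to its Hermitian dual; applying the minimal-globalization functor and composing with the continuous inclusion of that minimal globalization into the corresponding maximal globalization --- which is precisely the Hermitian dual of $H_c^{0,n-s}(G_\R/L_\R,\Cal L_{\lambda}^*)$ --- produces the separately continuous invariant Hermitian form on the compactly supported cohomology. Your treatment of part (2) (continuity of the action, identification with the minimal globalization via duality with Theorem \ref{Wong}(3), and irreducibility) is sound, up to the usual bookkeeping that the dual of the maximal globalization is the minimal globalization of the contragredient module, which is accounted for here by the passage from $\Cal L_{\lambda}$ to $\Cal L_{\lambda}^*$.
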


\subsection{The  topological dual of $\mathop{Ker } \Cal D$}
\begin{Prop}
Endow  $C^{\infty}_{\Cal D} (G_\R/K_\R, V_{\mu})$ with the strong topology relative to the smooth
topology on $C^{\infty} (G_\R/K_\R, V_{\mu})$.
Write 
\begin{equation*}
 \big( \mathop{Ker}\Cal D\big)^{\perp}=
\{ \Lambda \in C^{\infty} (G_\R/K_\R, V_{\mu})^*  \vert\; \Lambda\vert_{\mathop{Ker }\Cal D} = 0\}.
\end{equation*}
\begin{enumerate}
\item  
  $C^{\infty} (G_\R/K_\R, V_{\mu})^*/ \big(\mathop{Ker }\Cal D\big)^{\perp},$
  endowed with the quotient topology of the strong topology on $C^{\infty} (G_\R/K_\R, V_{\mu})^*$, is homeomorphic to
  $C^{\infty}_{\Cal D} (G_\R/K_\R, V_{\mu})^*$ endowed with the strong topology.
\item The topological spaces  $C^{\infty}_c (G_\R/K_\R, V_{\mu})/ \big( \mathop{Ker }\Cal D\big)^{\perp}\cap C^{\infty}_c (G_\R/K_\R, V_{\mu}) $ and 
$C^{\infty}_{\Cal D} (G_\R/K_\R, V_{\mu})^*$ and  are homeomorphic.
\end{enumerate}
\end{Prop}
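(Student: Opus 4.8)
The plan is to establish both homeomorphisms by the same standard functional-analytic mechanism: a quotient of a Fréchet (or $DF$) space by a closed subspace, combined with the fact that $C^\infty_{\Cal D}(G_\R/K_\R, V_\mu) = \mathop{Ker}\Cal D$ is a closed subspace of the Fréchet space $C^\infty(G_\R/K_\R, V_\mu)$. First I would record the structural inputs: $E := C^\infty(G_\R/K_\R, V_\mu)$ is a reflexive Fréchet (indeed Fréchet–Montel, hence Fréchet–Schwartz) space, being the space of smooth sections of a bundle over a manifold with the topology of uniform convergence of all derivatives on compact sets; $\mathop{Ker}\Cal D \subset E$ is closed since $\Cal D$ is continuous; and $C^\infty_c(G_\R/K_\R,V_\mu)$ is the strong dual $E'$ of $E$ under the usual integration pairing $\langle F, \phi\rangle = \int_{G_\R/K_\R}(F(x),\phi(x))\,dx$ (here one uses that $V_\mu$ carries the $K_\R$-invariant inner product $(\cdot,\cdot)$ from the Definition of $\Cal D$, so sections pair with sections).

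For part (1), I would invoke the general duality principle for locally convex spaces: if $M$ is a closed subspace of a locally convex space $E$, then the dual of $M$ (with the induced topology) is canonically $E'/M^\perp$, and when $E$ is Fréchet this is a topological isomorphism for the strong topologies — the point being that closed subspaces and Hausdorff quotients of Fréchet spaces are again Fréchet, so the strong dual behaves well and no pathologies with the bidual intervene. Concretely, restriction $E' \to M'$ is continuous and surjective (Hahn–Banach), its kernel is exactly $M^\perp = (\mathop{Ker}\Cal D)^\perp$, and the induced bijection $E'/M^\perp \to M'$ is a homeomorphism because both are strong duals of Fréchet spaces and the map is the transpose of the (topological) embedding $M \hookrightarrow E$; one applies the open mapping theorem in the category of $DF$-spaces, or simply quotes that for Fréchet $E$ the functor $X \mapsto X'_\beta$ turns closed-subspace inclusions into strong quotient maps. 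This gives exactly the homeomorphism asserted in (1).

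For part (2), the content is to identify $E'/M^\perp$ with $C^\infty_c(G_\R/K_\R, V_\mu)/\big((\mathop{Ker}\Cal D)^\perp\cap C^\infty_c(G_\R/K_\R, V_\mu)\big)$. Since $E$ is Fréchet–Montel (as $G_\R/K_\R$ is a manifold and $V_\mu$ finite dimensional), its strong dual is again Montel, and the subspace of compactly supported smooth sections is exactly the full dual $E'$ — that is, $C^\infty_c(G_\R/K_\R,V_\mu) = C^\infty(G_\R/K_\R,V_\mu)' = E'$ as topological vector spaces (this is the classical statement that every continuous functional on $C^\infty$ is represented by a compactly supported smooth density, with matching strong topologies on a Montel space). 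Under this identification, $(\mathop{Ker}\Cal D)^\perp$ as defined in the Proposition equals $(\mathop{Ker}\Cal D)^\perp \cap C^\infty_c$ trivially, so part (2) is just part (1) rewritten with $E'$ spelled out concretely as smooth compactly supported sections; the quotient topologies agree because the topologies on $E'$ and on $C^\infty_c$ agree. I would also remark that combining this with Serre duality and Theorem~\ref{Penrose} identifies $C^\infty_{\Cal D}(G_\R/K_\R,V_\mu)^*$ with the minimal globalization realized on compactly supported cohomology, matching Theorem~\ref{compact}.

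The main obstacle — or really the only delicate point — is verifying that the induced continuous bijection $E'/M^\perp \to M'$ is genuinely a homeomorphism (not merely a continuous bijection), i.e. that its inverse is continuous. For general locally convex $E$ this can fail, so one must use that $E$ is Fréchet (equivalently metrizable and complete): then $E/M'$ is meager-free and the open mapping / closed graph theorem in the $DF$-setting applies, or one argues directly that a strongly bounded subset of $M'$ is the image of a strongly bounded subset of $E'$, using that polars of neighbourhoods behave well under the closed embedding $M \hookrightarrow E$ (equicontinuous lifting). I expect this to be a short citation to a standard reference (e.g. the theory of strong duals of Fréchet spaces, or Serre's treatment of duality for cohomology with the relevant $TVS$ hypotheses), rather than a computation, since the Hausdorffness of the quotient — which is the substantive hypothesis making everything work — has already been secured by the closed-range statement in Theorem~\ref{Wong}(1) transported through Theorem~\ref{Penrose}.
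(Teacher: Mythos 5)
Your part (1) is correct and is essentially the paper's own argument: the paper likewise reduces it to the general duality statement (quoted from Vogan, Prop.\ 8.8(2)) that for a closed subspace $M$ of a reflexive (nuclear) Fr\'echet space $E$ the transpose of the inclusion gives a topological isomorphism $E^*/M^{\perp}\simeq M^*$ for the strong topologies, applied with $M=\mathop{Ker}\Cal D$, which is closed because $\Cal D$ is continuous.

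Part (2), however, contains a genuine gap. You assert that $C^{\infty}_c(G_\R/K_\R,V_{\mu})$ \emph{is} the strong dual of $C^{\infty}(G_\R/K_\R,V_{\mu})$ (``every continuous functional on $C^{\infty}$ is represented by a compactly supported smooth density''). That is false: the strong dual of the Fr\'echet space of smooth sections is the space of compactly supported \emph{distributional} sections; evaluation at a point is a continuous functional that is not given by pairing against any smooth compactly supported section, and the Montel/Schwartz property of $C^{\infty}$ gives structural facts about the strong dual but not this equality. Consequently part (2) is not ``part (1) rewritten'': the actual content is that the strictly smaller, merely dense, subspace $C^{\infty}_c\subset C^{\infty}(G_\R/K_\R,V_{\mu})^*$ still fills up the quotient by $\big(\mathop{Ker}\Cal D\big)^{\perp}$ and carries the same quotient topology, and your argument establishes neither the surjectivity of the natural map $C^{\infty}_c/\big((\mathop{Ker}\Cal D)^{\perp}\cap C^{\infty}_c\big)\to C^{\infty}(G_\R/K_\R,V_{\mu})^*/(\mathop{Ker}\Cal D)^{\perp}$ nor the continuity of its inverse, since both were derived from the false duality identification. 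The paper obtains this step from representation theory rather than pure functional analysis: the quotient $C^{\infty}(G_\R/K_\R,V_{\mu})^*/(\mathop{Ker}\Cal D)^{\perp}\simeq(\mathop{Ker}\Cal D)^*$ is transported by the transpose of the Penrose transform of Theorem \ref{Penrose} to $H^{0,n-s}_c(G_\R/L_\R,\Cal L^*_{\lambda})$, hence is Hausdorff and is the minimal globalization of its underlying $(\fg,K)$-module; this makes the kernel of $C^{\infty}_c\to C^{\infty}(G_\R/K_\R,V_{\mu})^*/(\mathop{Ker}\Cal D)^{\perp}$ closed and yields a continuous $G_\R$-equivariant map $T$ from the $C^{\infty}_c$-quotient into the minimal globalization, and it is the minimality property of that globalization which forces $T$ to be a topological isomorphism. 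Some input of this kind (or an elliptic-regularity argument producing smooth compactly supported representatives modulo $(\mathop{Ker}\Cal D)^{\perp}$) is indispensable and is missing from your proposal.
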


\begin{proof}
The first statement of the Proposition follows from (\cite{Vogan08}, Prop.8.8 (2)).
Indeed, the space of smooth sections of a finite-dimensional vector bundle, when endowed with the
smooth topology, is a Fr\'echet nuclear space. Thus,  it is reflexive.
Also, $\mathop{Ker }\Cal D \subset C^{\infty}(G_\R/K_\R, V_{\mu})$ is a closed subspace, as $\Cal D$
is a continuous operator. Hence, the hypothesis of (\cite{Vogan08}, Prop.8.8 (2)) is satisfied. We conclude
that 
 if $i: \mathop{Ker }\Cal D \to C^{\infty}(G_\R/K_\R, V_{\mu})$ is the natural inclusion, then  the transpose map $i^{t}$ induces the desired homeomorphism. This is,
\begin{equation*}
i^{t}: C^{\infty} (G_\R/K_\R, V_{\mu})^*/ \big( \mathop{Ker }\Cal D\big)^{\perp}\to C^{\infty}_{\Cal D} (G_\R/K_\R, V_{\mu})^*,
\end{equation*}
is a homeomorphism of topological spaces.

In order  to prove  the second statement of the Proposition
observe that \newline
 $C^{\infty} (G_\R/K_\R, V_{\mu})^*/ \big( \mathop{Ker }\Cal D\big)^{\perp}$
 endowed with the quotient topology of the strong topology on $C^{\infty} (G_\R/K_\R, V_{\mu})^*$
 is Hausdorff. Indeed, the transpose  $\Cal P^{t}$ of the homeomorphism $\Cal P$ in
 Theorem (\ref{Penrose}) is a homeomorphism from  $\big(\mathop{Ker }\Cal D\big)^*$ to the compactly supported cohomology $H_c^{0,n-s}(G_\R/L_\R, \Cal L^*_{\lambda})$.  As the cohomology space is Hausdorff (it is the minimal globalization
 of its underlying $(\fg, K)$-modules), $\big(\mathop{Ker}\Cal D\big)^* \simeq  C^{\infty} (G_\R/K_\R, V_{\mu})^*/ \big( \mathop{Ker }\Cal D\big)^{\perp} $ is Hausdorff.
It follows, see (\cite{Treves67}, Chapter 4),  that the kernel of the continuous map
 $C^{\infty}_c (G_\R/K_\R, V_{\mu})\to C^{\infty} (G_\R/K_\R, V_{\mu})^*/ \big( \mathop{Ker }\Cal D\big)^{\perp}$
 is closed. Hence, we have a continuous $G_\R$-equivariant map 
\begin{equation}\label{last}
T: C^{\infty}_c (G_\R/K_\R, V_{\mu})/ \big( \mathop{Ker }\Cal D\big)^{\perp} \to
C^{\infty} (G_\R/K_\R, V_{\mu})^*/ \big( \mathop{Ker } \Cal D\big)^{\perp},
\end{equation} 
where $C^{\infty} (G_\R/K_\R, V_{\mu})^*/ \big( \mathop{Ker } \Cal D\big)^{\perp}$ is the minimal globalization of its
underlying $(\fg, K)$-module, the Proposition follows.
\end{proof}

%%%%%%%%%%%%%%%%%%%%%%%%%%%%%%%%%%%%%%%%%%%%%
\section{Hermitian pairings}

If $E$ is a complete locally convex vector space, then its Hermitian dual $E^h$ is given by
\begin{align*}
E^h = \{ \Lambda: E \to \C &\text{ continuous} :\\
 &\Lambda ( a v + b w ) = \bar{a} \Lambda (v) + \bar{b} \Lambda (w)
\text{ for } a, b \in \C \text{ and } v, w \in E\}.
\end{align*}
 The space $E^h$ is conjugate linearly isomorphic to $E^*$. Using this identification
 $E^h$ can be endowed with the strong topology; see  (\cite{Vogan08}, Section 8.3).
 A Hermitian pairing between two complete locally convex vector spaces $E$ and $F$ is
a separately continuous map
\begin{equation*}
\langle \; ,\;  \rangle : E \times F \to \C
\end{equation*}
that is linear in the first variable and  conjugate linear in the second variable.
 This space is 
 in bijection with the space of continuous linear maps
$L(E, F^h)$; see for example (\cite{Vogan08}, Section 9).

\subsection{ Hermitian pairings on $H_c^{0,n-s}(G_\R/L_\R, \Cal L_{\lambda}^*)$}

\begin{Thm} \cite{Vogan08}\label{hdual}
Let $\fq = \fl \oplus \fu$ be a $\gt$-stable parabolic subalgebra. Let $\Cal Q$ be the analytic subgroup of
$G$ with Lie algebra $\fq$. Endow $G_\R/L_\R$ with the complex structure induced by the
open embedding $G_\R/L_\R \subset G/ \Cal Q$.  Let $(G_\R/L_\R)^{\text{opp}}$ be the manifold 
$G_\R/L_\R$ endowed with the opposite complex structure.
\begin{enumerate}
\item The Hermitian dual of $H_c^{0,n-s}(G_\R/L_\R, \Cal L_{-\lambda})$ is
$H^{n,s}((G_\R/L_\R)^{\text{opp}}, \Cal L_{-\lambda})$.
\item The space of separately continuous Hermitian pairings on 
 $H_c^{0,n-s}(G_\R/L_\R, \Cal L_{-\lambda})$
is  isomorphic to $H^{(n,n)(s,s)}(G_\R/L_\R\times (G_\R/L_\R)^{\text{opp}}, \Cal L_{\lambda}\otimes \Cal L_{-\lambda})$.
\item The space of $G_\R$-invariant Hermitian forms on  
$H_c^{0,n-s}(G_\R/L_\R, \Cal L_{-\lambda})$ is the space of $\text{diag}(G_\R\times G_\R)$- invariant
real cohomology classes in\newline
 $H^{(n,n)(s,s)}(G_\R/L_\R\times (G_\R/L_\R)^{\text{opp}}, \Cal L_{\lambda}\otimes \Cal L_{-\lambda})$.
\end{enumerate}
\end{Thm}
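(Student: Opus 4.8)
The plan is to derive all three parts from a single Serre-duality computation on the product manifold, treating the Hermitian structure as an additional layer on top of the $(\fg,K)$-module/globalization dictionary already recorded above.

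\textbf{Step 1: Serre duality on $G_\R/L_\R$ with the opposite complex structure.} First I would observe that conjugation of scalars intertwines the Dolbeault complex of $\Cal L_{-\lambda}$ on $G_\R/L_\R$ with the Dolbeault complex of $\overline{\Cal L_{-\lambda}} \simeq \Cal L_{-\lambda}$ on $(G_\R/L_\R)^{\text{opp}}$; under this conjugation a $(0,q)$-form becomes a $(0,q)$-form for the opposite complex structure, and the holomorphic line bundle is sent to its conjugate, which (for the characters in play) is again of the same type since $\lambda \in \ft^*$ and $-\overline{\lambda}$ on the compact torus equals $\lambda$. Because $\bar\partial$ has the closed range property in the relevant degree (Theorem \ref{Wong}(1) and the closed-range statements recalled in Section 3), the topological Serre duality pairing
\begin{equation*}
H_c^{0,n-s}(G_\R/L_\R, \Cal L_{-\lambda}) \times H^{n,s}((G_\R/L_\R)^{\text{opp}}, \Cal L_{-\lambda}) \to \C
\end{equation*}
is nondegenerate and identifies the second factor with the topological dual of the first; composing with complex conjugation in the target of functionals turns this into the Hermitian dual, giving part (1). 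Here I would cite (\cite{Vogan08}, Section 8.3) for the conjugate-linear identification $E^h \simeq E^*$ and (\cite{Serre55}, \cite{Laufer67}) for the duality itself.

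\textbf{Step 2: Hermitian pairings as a product cohomology group.} By the discussion at the start of Section 4, the space of separately continuous Hermitian pairings on $E := H_c^{0,n-s}(G_\R/L_\R,\Cal L_{-\lambda})$ is $L(E, E^h)$, which by part (1) equals $L(E, H^{n,s}((G_\R/L_\R)^{\text{opp}},\Cal L_{-\lambda}))$. Now I would invoke a K\"unneth-type identification: continuous linear maps from the compactly supported cohomology of one factor to the Dolbeault cohomology of the other are computed by the $(n,n)(s,s)$ cohomology of the product, the bidegrees being forced because each factor is concentrated in a single degree (Theorem \ref{compact}(1) on the compact side, Theorem \ref{Wong}(2) on the Dolbeault side). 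Concretely, $E^h$ on the first factor is represented by currents, the Dolbeault class on the second by smooth forms, and their external tensor product with the line bundle twist $\Cal L_\lambda \boxtimes \Cal L_{-\lambda}$ — note $\Cal L_{-\lambda}^* = \Cal L_\lambda$ — lands in $H^{(n,n)(s,s)}(G_\R/L_\R \times (G_\R/L_\R)^{\text{opp}}, \Cal L_\lambda \otimes \Cal L_{-\lambda})$. This yields part (2), again on the strength of closed-range properties so that the relevant spaces are Hausdorff and the nuclearity of spaces of smooth sections makes the topological K\"unneth formula available (\cite{Vogan08}, Sections 8--9).

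\textbf{Step 3: Extracting $G_\R$-invariant Hermitian forms.} A Hermitian pairing on $E$ is a form precisely when it is the pairing attached to a conjugate-symmetric element, and it is $G_\R$-invariant iff the corresponding element of $L(E,E^h)$ is $G_\R$-equivariant. Under the isomorphism of Step 2, $G_\R$-equivariance of the map corresponds to $\text{diag}(G_\R \times G_\R)$-invariance of the product cohomology class, while conjugate symmetry of the pairing corresponds to a reality condition on the class coming from the swap-and-conjugate involution of the product (which interchanges the two factors and reverses the complex structure, hence acts on $H^{(n,n)(s,s)}$); its fixed points are exactly the real cohomology classes. Intersecting the two conditions gives part (3).

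\textbf{Main obstacle.} The delicate point throughout is topological, not formal: every identification above — Serre duality, the duality $E^h \simeq (\,\cdot\,)^*$, and above all the K\"unneth isomorphism on the product — requires that $\bar\partial$ have closed range in the relevant degrees on $G_\R/L_\R$, on $(G_\R/L_\R)^{\text{opp}}$, and on the product manifold, so that the cohomology spaces are Hausdorff (equivalently, are the minimal/maximal globalizations one expects). On the single factors this is Theorem \ref{Wong}(1) and Theorem \ref{compact}, but for the product one must either prove closed range directly for the product $\bar\partial$ or, better, argue that the product cohomology is the maximal (resp. minimal) globalization of the external tensor product of the underlying Harish-Chandra modules and invoke Wong's theorem again; I expect this step to absorb most of the work, with the vanishing of off-diagonal bidegrees (so that the product cohomology is concentrated in the single bidegree $(n,n)(s,s)$) following from Theorem \ref{Wong}(2) and Theorem \ref{compact}(1) by a spectral sequence / K\"unneth argument.
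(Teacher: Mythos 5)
Your sketch is sound, but be aware that the paper itself does not prove Theorem \ref{hdual}: it is imported wholesale from \cite{Vogan08}, and the only argument-level material in the paper is what is recorded in Remark \ref{important} --- the conjugation $\sigma$ of (\ref{conjugate}) implementing the Hermitian dual, the kernel description of pairings, and the identification of $H^{(n,n)(s,s)}(G_\R/L_\R\times (G_\R/L_\R)^{\text{opp}}, \Cal L_{\lambda}\otimes \Cal L_{-\lambda})$ with the projective tensor product $H^{n,s}(G_\R/L_\R,\Cal L_{\lambda})\hat{\otimes}_{\pi}H^{n,s}((G_\R/L_\R)^{\text{opp}},\Cal L_{-\lambda})$ (Remark \ref{important}, part (5), citing \cite{Vogan08} and \cite{Treves67}). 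Your Steps 1--3 reconstruct exactly that chain: Serre duality plus conjugation for part (1), the bijection of pairings with $L(E,E^h)$ plus the K\"unneth/tensor identification for part (2), and diagonal invariance together with the swap-and-conjugate reality condition for part (3); so as a blind reconstruction of the cited result it takes the same route as the source. Two points to tighten. First, a bookkeeping slip in Step 1: the conjugate of $\Cal L_{-\lambda}$ is $\Cal L_{\lambda}$ (the character $\chi_{\lambda}$ of $L_\R$ being unitary), not $\Cal L_{-\lambda}$; one only lands in $H^{n,s}((G_\R/L_\R)^{\text{opp}},\Cal L_{-\lambda})$ after also passing to the dual bundle in Serre duality --- though the pairing $(\phi,\omega)\mapsto\int_{G_\R/L_\R}\phi\wedge\sigma(\omega)$ you write is the correct one and agrees with Remark \ref{important}, part (2). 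Second, the ``main obstacle'' you isolate (closed range and Hausdorffness on the product, so that the product cohomology is the completed tensor product of the factor cohomologies) is precisely the ingredient the paper does not re-prove but takes from \cite{Vogan08} together with nuclearity of the Fr\'echet factors; in the context of this paper you may simply cite it, and if you insist on proving it, your proposed argument (concentration of each factor in a single degree, by Theorem \ref{Wong} and Theorem \ref{compact}, plus nuclearity) is the standard one.
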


\begin{Rem}\label{important}

\begin{enumerate}
\item For a definition of \it{real} cohomology class, see  \rm(\cite{Vogan08}, page 339).\it

\item When $H_c^{0,n-s}(G_\R/L_\R, \Cal L_{-\lambda})$ is computed by the complex of compactly
supported smooth forms, the  isomorphism in part (1) of  Theorem \ref{hdual}  assigns to a
compactly supported form $\phi$ the functional $\Lambda_{\phi}$ 
on $H^{n,s}((G_\R/L_\R)^{\text{opp}}, \Cal L_{-\lambda})$ given by
\begin{align*}
\omega \in H^{n,s}((G_\R/L_\R)^{\text{opp}}, \Cal L_{-\lambda})& \to \Lambda_{\phi}(\omega) \in \C\\
\Lambda_{\phi}(\omega)  & = \int_{G_\R/L_\R} \phi \wedge \sigma(\omega).
\end{align*}

Here $\sigma$ is the  complex conjugation in cohomology induced by the map
\begin{equation}\label{conjugate}
C^{\infty}((G_\R/L_\R)^{\text{opp}}, \wedge^s\bar{\fu}\otimes\wedge^n\fu\otimes \C_{-\lambda})
\to C^{\infty}(G_\R/L_\R, \wedge^s \fu\otimes\wedge^n\bar{\fu}\otimes \C_{\lambda}).
\end{equation}

\item When $H_c^{0,n-s}(G_\R/L_\R, \Cal L_{-\lambda})$ is computed by the complex of compactly
supported smooth forms, and $w (\cdot, \cdot)$ represents a smooth cohomology class
in $H^{(n,n)(s,s)}(G_\R/L_\R\times (G_\R/L_\R)^{\text{opp}}, \Cal L_{\lambda}\otimes \Cal L_{-\lambda})$,
the hermitian pairing   in part (2) of Theorem \ref{hdual}  assigns to a compactly supported form $\phi$
the smooth Dolbeault cohomolgy class represented by $\eta (y) = \int_{G_\R/L_\R} \phi(x) \wedge w(x,y)$.

\item When $H_c^{0,n-s}(G_\R/L_\R, \Cal L_{-\lambda})$ is identified with the space of conjugate linear continuous
maps on $H^{n,s}((G_\R/L_\R)^{\text{opp}}, \Cal L_{-\lambda})$, the Hermitian pairing (2) in
Theorem (\ref{hdual})  assigns to a functional $\Lambda$, the Dolbeault cohomology class
$\eta (y) = \Lambda \big( \sigma\otimes 1) (w(\cdot, y) \big)$. Here $\sigma\otimes 1$ is the conjugation
in (\ref{conjugate}) applied to the ``first variable''.

\item The  space $H^{(n,n)(s,s)}(G_\R/L_\R\times (G_\R/L_\R)^{\text{opp}}, \Cal L_{\lambda}\otimes \Cal L_{-\lambda})$
is topologically isomorphic to the projective tensor\newline
$H^{(n,s}(G_\R/L_\R,  \Cal L_{\lambda})\hat{\otimes}_{\pi} H^{(n,s}((G_\R/L_\R)^{\text{opp}},  \Cal L_{-\lambda})$.
See \rm (\cite{Vogan08}, page 72) and (\cite{Treves67}, Definition 43.2 and 43.5).
\end{enumerate}

\end{Rem}

\subsection{Hermitian pairings in the $G_\R\backslash K_\R$-picture}\label{hp}

Write $(\tau^{\vee}_{\mu}, V^{\vee}_{\mu})$ for  the representation of $K_\R$ contragredient to
$(\tau_{\mu}, V_{\mu})$. Let $T_{\sigma}$ denote the conjugate linear isomorphism
from $V_{\mu}$ to $V^{\vee}_{\mu}$.

\begin{Def}\label{operators}
For
$F $ a smooth section of the vector bundle  
\begin{equation*}
(G_\R\times G_\R)\times_{K_\R\times K_\R} (V^{\vee}_{\mu}\otimes V_{\mu}),
\end{equation*}
 and $g \in G_\R$ write $(R(1,g)F)(x,y) = F(x, y g)$ and use the same notation for the differential of the right action.
Similarly define $R(g, 1)$. Choose
$\{ X_i\}$ an orthonormal basis of $\fp$ with respect to $( U, V) = - B( U, \bar{\gt V})$.
Let $\mathbb P$ be the projection operator in Definition (\ref{schmid}) and define the differential operator
\begin{equation*}
1\otimes \Cal D: C^{\infty} (G_\R\times G_\R/(K_\R\times K_\R),  V^{\vee}_{\mu}\otimes V_{\mu})
\to C^{\infty} (G_\R\times G_\R/(K_\R\times K_\R),  V^{\vee}_{\mu}\otimes V^-_{\mu})
\end{equation*} 
by means of

\begin{equation*}
[1 \otimes \Cal D F] ( x, y) = \sum_{i}  1\otimes \mathbb P \big[ (R(1, X_i) F) (x,y ) \otimes \bar{X}_i \big].
\end{equation*}
Similarly, define the operator $\underline{\Cal D}\otimes 1$.
\end{Def}

\begin{Def} Let $C^{\infty}_{\underline{\Cal D}\times \Cal D} (G_\R\times G_\R/(K_\R\times K_\R),  V^{\vee}_{\mu}\otimes V_{\mu})$ be the space of smooth sections of the vector bundle
$G_\R\times G_\R\times_{K_\R\times K_\R}  (V^{\vee}_{\mu}\otimes V_{\mu})$ that are annihilated by both
differential operators $1\otimes \Cal D$ and $\underline{\Cal D} \otimes 1$.
Endow the space  $C^{\infty}_{\underline{\Cal D}\times \Cal D} (G_\R\times G_\R/(K_\R\times K_\R),  V^{\vee}_{\mu}\otimes V_{\mu})$
with the strong topology relative to the smooth topology on the space of sections.
\end{Def}

\begin{Prop}\label{homeo}
 $H^{(n,n)(s,s)}(G_\R/L_\R\times G_\R/L_\R^{\text{opp}}, \Cal L_{\lambda}\otimes \Cal L_{-\lambda})$
is homeomorphic to  $C^{\infty}_{\underline{\Cal D}\times \Cal D} (G_\R\times G_\R/(K_\R\times K_\R),  V^{\vee}_{\mu}\otimes V_{\mu})$.

\end{Prop}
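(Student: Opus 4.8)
The plan is to build the homeomorphism by combining the two ``single-factor'' Penrose transforms of Theorem~\ref{Penrose}, one for each copy of $G_\R/L_\R$, with the tensor-product description of the cohomology in Remark~\ref{important}(5). Concretely, the cohomology space $H^{(n,n)(s,s)}(G_\R/L_\R\times (G_\R/L_\R)^{\text{opp}}, \Cal L_{\lambda}\otimes \Cal L_{-\lambda})$ is topologically isomorphic to the projective tensor product $H^{n,s}(G_\R/L_\R, \Cal L_{\lambda})\,\hat{\otimes}_{\pi}\, H^{n,s}((G_\R/L_\R)^{\text{opp}}, \Cal L_{-\lambda})$. Applying $\Cal P$ to the first factor and the analogue of $\Cal P$ for the opposite complex structure to the second factor (note that on $(G_\R/L_\R)^{\text{opp}}$ the antiholomorphic tangent space is $\bar\fu$, the minimal $K$-type of $H^{n,s}((G_\R/L_\R)^{\text{opp}}, \Cal L_{-\lambda})$ is $V^\vee_\mu$, and Schmid's operator becomes $\underline{\Cal D}$), we obtain a homeomorphism onto $C^\infty_{\Cal D}(G_\R/K_\R, V_\mu)\,\hat{\otimes}_\pi\, C^\infty_{\underline{\Cal D}}(G_\R/K_\R, V^\vee_\mu)$. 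So the first step is to verify that $\Cal P$ applied to the opposite manifold really has the asserted target, i.e.\ that the positivity hypothesis of Theorem~\ref{Penrose} holds for $-\lambda$ with respect to the opposite complex structure; this is the same inequality $\langle \lambda - 2\rho(\fl\cap\fk),\alpha\rangle > 0$ read on $\Delta(\bar\fu)$, which is automatic.

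The second step is to identify the completed projective tensor product $C^\infty_{\Cal D}(G_\R/K_\R, V_\mu)\,\hat{\otimes}_\pi\, C^\infty_{\underline{\Cal D}}(G_\R/K_\R, V^\vee_\mu)$ with the joint kernel space $C^{\infty}_{\underline{\Cal D}\times \Cal D}(G_\R\times G_\R/(K_\R\times K_\R), V^\vee_\mu\otimes V_\mu)$. Here I would use the standard fact (see \cite{Treves67}, Theorem 51.6 and the surrounding material) that for $C^\infty$ sections of finite-dimensional bundles, $C^\infty(M\times N, E\boxtimes F)\simeq C^\infty(M,E)\,\hat{\otimes}_\pi\, C^\infty(N,F)$ as topological vector spaces, because $C^\infty(M,E)$ is a nuclear Fr\'echet space. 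Under this identification, the operator $\Cal D\otimes 1$ on the left corresponds to $\underline{\Cal D}\otimes 1$ acting in the first variable and $1\otimes\Cal D$ to $\Cal D$ acting in the second variable; the kernel of the first tensored with the whole second factor, intersected with the whole first factor tensored with the kernel of the second, is exactly the joint kernel. One must check that this intersection, as a closed subspace, carries the subspace topology from either presentation consistently — this follows because kernels of continuous operators are closed and the tensor-product topology restricts correctly, using again nuclearity so that $\ker(\Cal D)\,\hat{\otimes}_\pi\,\ker(\underline{\Cal D})$ is the closure of the algebraic tensor product inside the joint kernel and in fact equals it.

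The third, bookkeeping, step is to confirm that the composite homeomorphism is the natural one — that is, to spell out that a smooth representative $w(x,y)$ of the $(n,n)(s,s)$-class maps, under the iterated Penrose integral \eqref{pa} applied in each variable, to a section of $G_\R\times G_\R\times_{K_\R\times K_\R}(V^\vee_\mu\otimes V_\mu)$ annihilated by both $1\otimes\Cal D$ and $\underline{\Cal D}\otimes 1$, and conversely. The annihilation in each variable is precisely the statement that the Penrose transform of a Dolbeault class lands in $\mathop{Ker}\Cal D$ (resp.\ $\mathop{Ker}\underline{\Cal D}$), applied with the other variable held fixed; equivariance under $R(g,1)$ and $R(1,g)$ follows from the $G_\R$-equivariance of $\Cal P$ in each factor.

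\textbf{Main obstacle.} The genuine work is the second step: pushing the projective-tensor-product identification through the passage to kernels, i.e.\ showing $\ker(1\otimes\Cal D)\cap\ker(\underline{\Cal D}\otimes 1)$ on $C^\infty(G_\R\times G_\R/(K_\R\times K_\R), V^\vee_\mu\otimes V_\mu)$ coincides, as a topological vector space, with $C^\infty_{\underline{\Cal D}}(G_\R/K_\R,V^\vee_\mu)\,\hat{\otimes}_\pi\, C^\infty_{\Cal D}(G_\R/K_\R,V_\mu)$ rather than merely containing it. The containment of the completed tensor product in the joint kernel is clear; equality requires an argument that every jointly-annihilated section decomposes appropriately, which is where nuclearity of the Fr\'echet spaces involved and the closed-range property for $\Cal D$ and $\underline{\Cal D}$ (equivalently, for $\bar\partial$, via Theorem~\ref{Penrose} and Theorem~\ref{Wong}(1)) are essential. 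All the identifications among cohomology, $\mathop{Ker}\Cal D$, and tensor products are then just invocations of Theorem~\ref{Penrose}, Remark~\ref{important}(5), and standard functional-analytic facts about nuclear spaces.
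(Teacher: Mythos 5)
Your overall route coincides with the paper's: both arguments decompose the double cohomology via Remark~\ref{important}(5) as $H^{n,s}(G_\R/L_\R,\Cal L_{\lambda})\hat{\otimes}_{\pi}H^{n,s}((G_\R/L_\R)^{\text{opp}},\Cal L_{-\lambda})$, apply the two Penrose transforms factorwise (the paper gets that $\Cal P\hat{\otimes}\Cal P_{\text{opp}}$ is a homeomorphism from (\cite{Treves67}, Prop.\ 43.9), since all spaces involved are Fr\'echet), and reduce everything to identifying $C^{\infty}_{\underline{\Cal D}}(G_\R/K_\R,V^{\vee}_{\mu})\hat{\otimes}_{\pi}C^{\infty}_{\Cal D}(G_\R/K_\R,V_{\mu})$ with the joint kernel $C^{\infty}_{\underline{\Cal D}\times\Cal D}(G_\R\times G_\R/(K_\R\times K_\R),V^{\vee}_{\mu}\otimes V_{\mu})$.

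At precisely the point you yourself call ``the main obstacle,'' however, your proposal has a genuine gap. You assert that the equality of the completed tensor product of the kernels with the joint kernel follows from nuclearity together with ``the closed-range property for $\Cal D$ and $\underline{\Cal D}$,'' but no such property is available: Theorem~\ref{Wong}(1) gives closed range for $\bar{\partial}$ on forms, and the Penrose transform of Theorem~\ref{Penrose} only identifies kernels, not images, so nothing shows that $\Cal D$ has closed range in $C^{\infty}(G_\R/K_\R,V^{-}_{\mu})$. Moreover, even granting closed range, you do not indicate how nuclearity would force a jointly annihilated section to lie in $\mathop{Ker}(\underline{\Cal D})\hat{\otimes}_{\pi}\mathop{Ker}(\Cal D)$; that would require a K\"unneth-type exactness argument for $\hat{\otimes}_{\pi}$ on exact sequences of Fr\'echet spaces which you neither state nor carry out. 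The paper closes this step by a different device that avoids any closed-range hypothesis on $\Cal D$: the space $C^{\infty}(G_\R\times G_\R/(K_\R\times K_\R),V^{\vee}_{\mu}\otimes V_{\mu})$ is Souslin, the joint kernel is a closed subspace and hence Souslin, and the natural continuous surjection onto $\mathop{Ker}(\underline{\Cal D})\hat{\otimes}_{\pi}\mathop{Ker}(\Cal D)$ is then automatically open by the open mapping theorem for Souslin spaces (\cite{Treves67}, Appendix Corollary 1). So your outline names the correct statement but does not prove it, and the mechanism you point to is not established in the paper or in your argument; to repair the proposal you should either supply the exactness argument (including a proof that $\Cal D$ and $\underline{\Cal D}$ have closed range) or replace it by the Souslin open-mapping argument.
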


\begin{proof} 
We first prove that  
 \begin{align}\label{dos}
& C^{\infty}_{\underline{\Cal D}} (G_\R/K_\R ,  V^{\vee}_{\mu}) \hat{\otimes}_{\pi}  C^{\infty}_{\Cal D} (G_\R/K_\R,  V_{\mu}) \text{ is homeomorphic to }\\ \nonumber
 &C^{\infty}_{\underline{\Cal D}\times \Cal D} (G_\R\times G_\R/(K_\R\times K_\R),  V^{\vee}_{\mu}\otimes V_{\mu}).
 \end{align}
 
 It is not difficult to show, using (\cite{Treves67},  Prop 44.1), that  
 $C^{\infty}(G_\R/K_\R,  V_{\mu})\simeq C^{\infty}(G_\R/K_\R)\hat{\otimes}_{\pi}V_{\mu}$.
Using this observation and arguing as in  (\cite{Treves67},  Thm 51.6) we show that
$C^{\infty} (G_\R\times G_\R/(K_\R\times K_\R),  V^{\vee}_{\mu}\otimes V_{\mu})$ is canonically isomorphic
to $C^{\infty} (G_\R/K_\R ,  V^{\vee}_{\mu}) \hat{\otimes}_{\pi}  C^{\infty}(G_\R/K_\R,  V_{\mu})$. $C^{\infty} (G_\R\times G_\R/(K_\R\times K_\R),  V^{\vee}_{\mu}\otimes V_{\mu})$ has the structure of Souslin space, see
(\cite{Treves67}, page 556). As $\text{Ker}(1\otimes \Cal D)$  and  $\text{Ker}(\underline{\Cal D}\otimes 1)$ are closed in
$C^{\infty} (G_\R\times G_\R/(K_\R\times K_\R),  V^{\vee}_{\mu}\otimes V_{\mu})$, we conclude that the space
$C^{\infty}_{\underline{\Cal D}\times \Cal D} (G_\R\times G_\R/(K_\R\times K_\R),  V^{\vee}_{\mu}\otimes V_{\mu})$ is Souslin.
By (\cite{Treves67}, Appendix Corollary 1),  the surjective continuous map
\begin{equation}\label{tres}
C^{\infty}_{\underline{\Cal D}\times \Cal D} (G_\R\times G_\R/(K_\R\times K_\R),  V^{\vee}_{\mu}\otimes V_{\mu}),
\to \text{Ker}(\underline{\Cal D})\hat{\otimes}_{\pi} \text{Ker}(\Cal D)
\end{equation}
is open. This proves our claim.

Next,   we recall  (Remark (\ref{important}), part 5)  that the spaces
\begin{align}\label{uno}
&H^{(n,n)(s,s)}(G_\R/L_\R\times (G_\R/L_\R)^{\text{opp}}, \Cal L_{\lambda}\otimes \Cal L_{-\lambda}),
\text{ and } \\ \nonumber
&H^{n,s}(G_\R/L_\R, \Cal L_{\lambda}) \hat{\otimes}_{\pi} H^{n,s}((G_\R/L_\R)^{\text{opp}}, \Cal L_{-\lambda}).
\end{align}
are homeomorphic.

 To complete the proof of  the Proposition it is enough to argue that the tensor products in displays
  (\ref{uno}) and (\ref{dos})
are homeomorphic.  In order to prove so, recall  that  under our assumptions on $\lambda$,
the Penrose transforms maps  of Theorem (\ref{Penrose}),
\begin{align*}
&\Cal P : H^{n,s}(G_\R/L_\R, \Cal L_{\lambda})\to  C^{\infty}_{\Cal D} (G_\R/K_\R, V_{\mu}) \text{ and }\\
&\Cal P_{\text{opp}} :  H^{n,s}((G_\R/L_\R)^{\text{opp}}, \Cal L_{-\lambda})\to  C^{\infty}_{\underline{\Cal D}} 
(G_\R/K_\R, V^{\vee}_{\mu})
\end{align*}
are homeomorphism of topological spaces.
Since the spaces under consideration are Fr\'echet,  by  (\cite{Treves67}, Prop 43.9), $\Cal P\hat{\otimes} \Cal P_{\text{opp}}$  implements  the desired homeomorphism.
 (For a definition of $\Cal P\hat{\otimes} \Cal P_{\text{opp}}$ see (\cite{Treves67}, Definition 43.6).)

\end{proof}

\begin{Def}

 Write  $\Cal P\otimes \Cal P_{\text{opp}}$ for the map that implements the homeomorphism from
$H^{(n,n)(s,s)}(G_\R/L_\R\times (G_\R/L_\R)^{\text{opp}}, \Cal L_{\lambda}\otimes \Cal L_{-\lambda})$
to
 $C^{\infty}_{\underline{\Cal D}\times \Cal D} (G_\R\times G_\R/(K_\R\times K_\R),  V^{\vee}_{\mu}\otimes V_{\mu})$.
 \begin{enumerate}
\item Let $\Cal P_{\text{opp}}^h$ be the Hermitian transpose to the Penrose transform
\begin{equation*}
\Cal P_{opp} :  H^{n,s}((G_\R/L_\R)^{\text{opp}}, \Cal L_{-\lambda})\to  C^{\infty}_{\underline{\Cal D}} (G_\R/K_\R, {V}^{\vee}_{\mu}). 
\end{equation*}
That is,
\begin{align*}
\Cal P_{\text{opp}}^h : \big(C^{\infty}_{\underline{\Cal D}} (G_\R/K_\R, {V}^{\vee}_{\mu})\big)^h &\to H^{n,s}((G_\R/L_\R)^{\text{opp}}, \Cal L_{-\lambda})^h\\
f &\to \Cal P_{\text{opp}}^h(f) \text{ where }\\
  \Cal P_{\text{opp}}^h(f) (\omega) & = f( \Cal P_{\text{opp}}(\omega))
\text{ for  each  cohomology class } [\omega ].
\end{align*}
\item Similarly, let $\big({\Cal P^{-1}_{\text{opp}}}\big)^h$ be the Hermitian transpose of $\Cal P^{-1}_{\text{opp}}$.
By definition, if $\eta \in  H^{n,s}(G_\R/L_\R^{\text{opp}}, \Cal L_{-\lambda})^h$
and $F \in  C^{\infty}_{\underline{\Cal D}} (G_\R/K_\R, {V}^{\vee}_{\mu})$, then $\big({\Cal P^{-1}_{\text{opp}}}\big)^h(\eta) (F) = \eta( \Cal P^{-1}_{\text{opp}} F)$.
\end{enumerate}
\end{Def}

\begin{Lem}
The composition 
\begin{equation*}
\big(\Cal P^{-1}_{\text{opp}} \big)^h\circ \Cal P_{\text{opp}}^h : \big(C^{\infty}_{\underline{\Cal D}}(G_\R/K_\R, {V}^{\vee}_{\mu})\big)^h\to
\big(C^{\infty}_{\underline{\Cal D}}(G_\R/K_\R, {V}^{\vee}_{\mu})\big)^h
\end{equation*}
 is the identity map
on $\big(C^{\infty}_{\underline{\Cal D}}(G_\R/K_\R, {V}^{\vee}_{\mu})\big)^h$. Similarly 
$\Cal P_{\text{opp}}^h\circ \big(\Cal P^{-1}_{\text{opp}}\big)^h$ is the identity map on 
 $H^{n,s}(\big(G_\R/L_\R\big)^{\text{opp}}, \Cal L_{-\lambda})^h$.
\end{Lem}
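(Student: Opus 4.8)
The plan is to unwind the definitions of the Hermitian transposes and observe that the claimed identities are formal consequences of $\Cal P_{\text{opp}}$ being a homeomorphism with inverse $\Cal P_{\text{opp}}^{-1}$. First I would recall that $\Cal P_{\text{opp}}$ is, by Theorem \ref{Penrose}, a $G_\R$-equivariant homeomorphism
\begin{equation*}
\Cal P_{\text{opp}} : H^{n,s}((G_\R/L_\R)^{\text{opp}}, \Cal L_{-\lambda}) \to C^{\infty}_{\underline{\Cal D}}(G_\R/K_\R, V^{\vee}_{\mu}),
\end{equation*}
so in particular $\Cal P_{\text{opp}}^{-1}\circ \Cal P_{\text{opp}} = \mathrm{id}$ on $H^{n,s}((G_\R/L_\R)^{\text{opp}}, \Cal L_{-\lambda})$ and $\Cal P_{\text{opp}}\circ \Cal P_{\text{opp}}^{-1} = \mathrm{id}$ on $C^{\infty}_{\underline{\Cal D}}(G_\R/K_\R, V^{\vee}_{\mu})$, both as continuous (indeed topological) isomorphisms.

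Next I would verify that Hermitian transposition is contravariantly functorial: for composable continuous linear maps $A : E \to F$ and $B : F \to H$ between complete locally convex spaces, one has $(B\circ A)^h = A^h \circ B^h$ on $H^h$, and $(\mathrm{id}_E)^h = \mathrm{id}_{E^h}$. This is immediate from the defining formula $A^h(f) = f\circ A$: for $f \in H^h$ and $v \in E$, $(A^h\circ B^h)(f)(v) = B^h(f)(Av) = f(B(Av)) = f((B\circ A)(v)) = (B\circ A)^h(f)(v)$, and continuity is inherited since precomposition with a continuous map sends continuous conjugate-linear functionals to continuous conjugate-linear functionals. Applying this with $A = \Cal P_{\text{opp}}$, $B = \Cal P_{\text{opp}}^{-1}$ gives
\begin{equation*}
\big(\Cal P^{-1}_{\text{opp}}\big)^h\circ \Cal P_{\text{opp}}^h = \big(\Cal P_{\text{opp}}^{-1}\circ \Cal P_{\text{opp}}\big)^h = (\mathrm{id})^h = \mathrm{id}
\end{equation*}
on $\big(C^{\infty}_{\underline{\Cal D}}(G_\R/K_\R, V^{\vee}_{\mu})\big)^h$, and symmetrically $\Cal P_{\text{opp}}^h\circ \big(\Cal P^{-1}_{\text{opp}}\big)^h = \big(\Cal P_{\text{opp}}\circ \Cal P_{\text{opp}}^{-1}\big)^h = \mathrm{id}$ on $H^{n,s}((G_\R/L_\R)^{\text{opp}}, \Cal L_{-\lambda})^h$. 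Here I would note that the two Hermitian transposes appearing in the statement agree with the functorial $(\cdot)^h$ applied respectively to $\Cal P_{\text{opp}}$ and to $\Cal P_{\text{opp}}^{-1}$: this is exactly what the two clauses of the preceding Definition spell out, since $\Cal P_{\text{opp}}^h(f)(\omega) = f(\Cal P_{\text{opp}}(\omega))$ and $\big(\Cal P^{-1}_{\text{opp}}\big)^h(\eta)(F) = \eta(\Cal P^{-1}_{\text{opp}}F)$ are precisely the "precompose" formulas.

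There is no serious obstacle here; the only point requiring a word of care is that all maps involved are genuinely continuous so that their Hermitian transposes are well defined as maps of the strong Hermitian duals, and that the identification $E^h \simeq E^*$ (conjugate-linear) is compatible with transposition — both of which are recorded in the discussion preceding the Definition and in \cite{Vogan08}, Section 8.3. One could alternatively give a one-line direct computation without invoking functoriality: for $f \in \big(C^{\infty}_{\underline{\Cal D}}(G_\R/K_\R, V^{\vee}_{\mu})\big)^h$ and $F$ in that same space, $\big(\Cal P^{-1}_{\text{opp}}\big)^h\big(\Cal P_{\text{opp}}^h(f)\big)(F) = \Cal P_{\text{opp}}^h(f)\big(\Cal P_{\text{opp}}^{-1}F\big) = f\big(\Cal P_{\text{opp}}(\Cal P_{\text{opp}}^{-1}F)\big) = f(F)$, and likewise for the other composition. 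I would present the functorial version as the main argument and mention the direct computation as a remark.
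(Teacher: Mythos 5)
Your proposal is correct and takes the same route as the paper, which simply declares the Lemma clear from the definitions; your unwinding via the contravariant functoriality of the Hermitian transpose (or the equivalent one-line precomposition computation $\big(\Cal P^{-1}_{\text{opp}}\big)^h\big(\Cal P_{\text{opp}}^h(f)\big)(F) = f(F)$) is exactly the content that ``clear from the definitions'' compresses. No gap to report.
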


\begin{proof} It is clear from the definitions.
\end{proof}

\begin{Rem} \label{pa1}

\item Using the explicit formula \ref{pa} one can show that
$$T_{\sigma} \Cal P ( w) (x) = \Cal P_{\text{opp}} (\sigma w) (x)$$
where $\sigma$ is the conjugation described in part (2) of Remark \ref{important}
and $T_{\sigma}$  is the conjugate linear isomorphism
from $V_{\mu}$ to $V^{\vee}_{\mu}$.

\end{Rem}

\begin{Thm}\label{leticia}
The space of separately continuous Hermitian pairings on\newline
$\big(C^{\infty}_{\underline{\Cal D}}(G_\R/K_\R, {V}^{\vee}_{\mu})\big)^h$ is $C^{\infty}_{\underline{\Cal D}\times \Cal D} (G_\R\times G_\R/(K_\R\times K_\R),  V_{\mu}\otimes V^{\vee}_{\mu})$.
\end{Thm}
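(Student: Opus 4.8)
The plan is to transport the computation of Hermitian pairings through the Penrose transform, reducing it to Theorem~\ref{hdual}(2) applied on the cohomology side. First I would unwind the definitions: by the discussion in Section~4, the space of separately continuous Hermitian pairings on a complete locally convex space $E$ is in bijection with $L(E, E^h)$, the space of continuous linear maps from $E$ to its Hermitian dual. So the target object is $L\big( (C^{\infty}_{\underline{\Cal D}})^h, \, (C^{\infty}_{\underline{\Cal D}})^h{}^{h} \big)$; using that the Hermitian dual of the Hermitian dual of a reflexive space is the original space (the section spaces here are Fr\'echet nuclear, hence reflexive, and $\mathop{Ker}\underline{\Cal D}$ is a closed subspace, so $C^{\infty}_{\underline{\Cal D}}$ is reflexive), this is $L\big( (C^{\infty}_{\underline{\Cal D}})^h, \, C^{\infty}_{\underline{\Cal D}} \big)$.

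Next I would use the Penrose transform $\Cal P_{\text{opp}} : H^{n,s}((G_\R/L_\R)^{\text{opp}}, \Cal L_{-\lambda}) \to C^{\infty}_{\underline{\Cal D}}(G_\R/K_\R, V^{\vee}_{\mu})$ of Theorem~\ref{Penrose}, which is a topological isomorphism. Conjugating by $\Cal P_{\text{opp}}$ and its Hermitian transposes (the maps $\Cal P^h_{\text{opp}}$ and $(\Cal P^{-1}_{\text{opp}})^h$ introduced above, which by the preceding Lemma are mutually inverse homeomorphisms) gives a homeomorphism between the space of separately continuous Hermitian pairings on $(C^{\infty}_{\underline{\Cal D}})^h$ and the space of separately continuous Hermitian pairings on $H^{n,s}((G_\R/L_\R)^{\text{opp}}, \Cal L_{-\lambda})^h$. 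But by Theorem~\ref{hdual}(1), $H^{n,s}((G_\R/L_\R)^{\text{opp}}, \Cal L_{-\lambda})$ is itself the Hermitian dual of $H_c^{0,n-s}(G_\R/L_\R, \Cal L_{-\lambda})$; so $H^{n,s}((G_\R/L_\R)^{\text{opp}}, \Cal L_{-\lambda})^h \cong H_c^{0,n-s}(G_\R/L_\R, \Cal L_{-\lambda})$ by reflexivity, and the space of its separately continuous Hermitian pairings is, by Theorem~\ref{hdual}(2), isomorphic to $H^{(n,n)(s,s)}(G_\R/L_\R \times (G_\R/L_\R)^{\text{opp}}, \Cal L_{\lambda}\otimes \Cal L_{-\lambda})$.

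Finally I would apply Proposition~\ref{homeo}, which identifies that cohomology group homeomorphically with $C^{\infty}_{\underline{\Cal D}\times \Cal D}(G_\R\times G_\R/(K_\R\times K_\R), V^{\vee}_{\mu}\otimes V_{\mu})$; up to the flip of tensor factors $V^{\vee}_{\mu}\otimes V_{\mu} \leftrightarrow V_{\mu}\otimes V^{\vee}_{\mu}$ (harmless, implemented by $T_\sigma$ together with the $K_\R\times K_\R$-equivariance, cf.\ Remark~\ref{pa1}) this is exactly the claimed space. Chaining the homeomorphisms yields the theorem, and one should record that the composite isomorphism is $\Cal P \otimes \Cal P_{\text{opp}}$ composed with the duality identifications, so that a pairing corresponds to the kernel $w$ via the integral formula $\eta(y) = \int_{G_\R/L_\R}\phi(x)\wedge w(x,y)$ of Remark~\ref{important}(3).

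The main obstacle I anticipate is not any single hard estimate but bookkeeping: one must verify that \emph{separate continuity} of pairings is preserved under all these transports (this is where reflexivity, nuclearity, and the Fr\'echet property of the section spaces are genuinely used, so that transposes are continuous and double-duality is available), and one must check that the conjugate-linear twist $T_\sigma$ and the conjugation $\sigma$ of Remark~\ref{important}(2) are threaded consistently through the Hermitian transposes so that the final identification lands in $V^{\vee}_{\mu}\otimes V_{\mu}$ with the correct (anti)linearity in each variable. The compatibility relation $T_\sigma \Cal P(w) = \Cal P_{\text{opp}}(\sigma w)$ of Remark~\ref{pa1} is the key bridge that makes this twist-tracking go through cleanly.
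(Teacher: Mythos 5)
Your proposal is correct and takes essentially the same route as the paper's proof: identify pairings on $\big(C^{\infty}_{\underline{\Cal D}}\big)^h$ with continuous maps into $C^{\infty}_{\underline{\Cal D}}$, conjugate by $\Cal P_{\text{opp}}$ and its Hermitian transpose to reduce to Theorem \ref{hdual} on the cohomology side, and return via Proposition \ref{homeo}, with the conjugate-linear twist handled by Remark \ref{pa1}. The paper simply makes the two directions explicit elementwise, writing the correspondence as $T_{\phi}=\Cal P_{\text{opp}}\circ T_{\omega}\circ \big(\Cal P_{\text{opp}}\big)^h$ with $\phi=\big(\Cal P\otimes\Cal P_{\text{opp}}\big)(\omega)$, which is exactly your chain of homeomorphisms.
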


\begin{proof}
An element  $\phi \in C^{\infty}_{\underline{\Cal D}\times \Cal D} (G_\R\times G_\R/(K_\R\times K_\R),  V_{\mu}\otimes V^{\vee}_{\mu})$ defines a linear map 
\begin{align*}
T_{\phi} : \big(C^{\infty}_{\underline{\Cal D}}(G_\R/K_\R, {V}^{\vee}_{\mu}))\big)^h & \to C^{\infty}_{\underline{\Cal D}}(G_\R/K_\R, {V}^{\vee}_{\mu})\\ 
f &\to f(  T_{\sigma}\otimes 1 \; \phi ).
\end{align*}
 We must show that  the  $T_{\phi}$ is continuous.
Let $\omega$   be a representative of the cohomology class
in $[\big(\Cal P\otimes \Cal P_{\text{opp}}\big)^{-1}(\phi)] \in H^{(n,n)(s,s)}(G_\R/L_\R\times (G_\R/L_\R)^{\text{opp}}, \Cal L_{\lambda}\otimes \Cal L_{-\lambda})$.
By (\cite{Vogan08}, Thm 8),  $\omega$ defines a continuous linear map
\begin{equation*}
T_{\omega} : H_c^{0,n-s}(G_\R/L_\R, \Cal L_{-\lambda})\to H^{n,s}((G_\R/L_\R)^{\text{opp}}, \Cal L_{-\lambda}).
\end{equation*}
We argue  that the  continuous composition $\Cal P_{\text{opp}} \circ T_{\omega}\circ \big( \Cal P_{\text{opp}} \big)^h$
is $T_{\phi}$.
Indeed, if $f\in \big(C^{\infty}_{\underline{\Cal D}}(G_\R/K_\R, {V}^{\vee}_{\mu})\big)^h$, then
 \begin{align}\label{calculus}
 T_{\omega}\circ  \big( \Cal P_{\text{opp}} \big)^h (f) (x)& =
  \big( \Cal P_{\text{opp}} \big)^h (f)(\sigma\otimes 1 \;\omega (x, \cdot))\\ \nonumber
  & \text{by  (2) in Remark \ref{important}}\\ \nonumber
  &= f\big(\sigma\otimes 1 \circ \Cal P_{\text{opp}} \;\omega (x , \cdot))\\ \nonumber
  &\text{by definition of $\big( \Cal P_{\text{opp}} \big)^h$}.\\ \nonumber
  \end{align}
 Hence,
\begin{align*}
\Cal P_{\text{opp}} \circ T_{\omega}\circ \big( \Cal P_{\text{opp}} \big)^h(f)  & =
f \big( \Cal P_{\text{opp}}\circ \sigma\otimes 1 \otimes \Cal P_{\text{opp}} \;\omega (\cdot , \cdot))\\ 
  &  = f \big(  T_{\sigma}\otimes 1 \circ \Cal P\otimes \Cal P_{\text{opp}} \;\omega (\cdot , \cdot))\\
  &\text{ by Remark \ref{pa1}}\\
  & = f \big(  T_{\sigma}\otimes 1\; \phi) = T_{\phi}(f).
  \end{align*}

  This shows that $T_{\phi}$ is continuous.
  
 To complete the proof  we show that every continuous linear map \newline
 $T: \big(C^{\infty}_{\underline{\Cal D}}(G_\R/K_\R, {V}^{\vee}_{\mu})\big)^h  \to 
  C^{\infty}_{\underline{\Cal D}}(G_\R/K_\R, {V}^{\vee}_{\mu})$
  is of the form $T_{\phi}$ for some  section  $\phi$ in  $C^{\infty}_{\underline{\Cal D}\times \Cal D} (G_\R\times G_\R/(K_\R\times K_\R),  V_{\mu}\otimes V^{\vee}_{\mu})$. Given  such a map $T$, the composition
  $\Cal P_{\text{opp}}^{-1}\circ T\circ \big(\Cal P^{-1}_{\text{opp}} \big)^h$ is a continuous linear map
  from $H_c^{0,n-s}(G_\R/L_\R, \Cal L_{-\lambda})$ to $H^{n,s}((G_\R/L_\R)^{\text{opp}}, \Cal L_{-\lambda})$.
  By (\cite{Vogan08}, Thm 8), there exists a cohomology class
  $[\omega ] \in  H^{(n,n)(s,s)}(G_\R/L_\R\times (G_\R/L_\R)^{\text{opp}}, \Cal L_{\lambda}\otimes \Cal L_{-\lambda})$
so that $\Cal P_{\text{opp}}^{-1}\circ T\circ \big(\Cal P^{-1}_{\text{opp}} \big)^h = T_{\omega}$.
Hence, $T = \Cal P_{\text{opp}} \circ T_{\omega} \circ \big(\Cal P_{\text{opp}} \big)^h$.
Now, the computation in (\ref{calculus}) shows that $T = T_{\phi}$ for $\phi = \big(\Cal P\otimes \Cal P_{\text{opp}}\big) (\omega)$.
\end{proof}

\subsection{ Hermitian forms on $H_c^{0,n-s}(G_\R/L_\R, \Cal L_{-\lambda})$
in terms of Hermitian forms on $\big (\mathop{Ker} \underline{\Cal D}\big)^h$}\label{hf}

\begin{Lem}
Let $[\omega] \in  H^{(n,n)(s,s)}(G_\R/L_\R\times (G_\R/L_\R)^{\text{opp}}, \Cal L_{\lambda}\otimes \Cal L_{-\lambda})$
and let $\phi =  \Cal P\otimes \Cal P_{\text{opp}} (\omega)$. Then,
\begin{enumerate}
\item $\langle \; , \; \rangle_{\omega}$ is $\text{diag}(G_\R\times G_\R)$-invariant if and only if $\langle \;,\;\rangle_{\phi}$ is $\text{diag}(G_\R\times G_\R)$-invariant.
\item $\langle \; , \; \rangle_{\omega}$ is positive definite if and only if $\langle \;,\;\rangle_{\phi}$
 is positive definite.
 \end{enumerate}
 \end{Lem}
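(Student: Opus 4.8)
The plan is to transport the two properties across the homeomorphism $\Cal P\otimes \Cal P_{\text{opp}}$ of Proposition \ref{homeo}, using the explicit dictionary between the pairing $\langle\,,\,\rangle_\omega$ attached to a cohomology class and the pairing $\langle\,,\,\rangle_\phi$ attached to the corresponding section $\phi$. Recall from Theorem \ref{leticia} (and its proof) that the linear map $T_\phi$ implementing $\langle\,,\,\rangle_\phi$ is conjugate to the linear map $T_\omega$ implementing $\langle\,,\,\rangle_\omega$, via the identity
\begin{equation*}
T_\phi = \Cal P_{\text{opp}}\circ T_\omega\circ\big(\Cal P_{\text{opp}}\big)^h.
\end{equation*}
The first step is therefore to make precise that, in this formulation, the Hermitian pairing $\langle\,,\,\rangle_\phi$ on $\big(C^\infty_{\underline{\Cal D}}(G_\R/K_\R,V^\vee_\mu)\big)^h$ is (up to the identifications already set up) obtained from $\langle\,,\,\rangle_\omega$ on $H_c^{0,n-s}(G_\R/L_\R,\Cal L_{-\lambda})$ by composing with the Hermitian transpose of the Penrose transform $\Cal P_{\text{opp}}$ on both arguments; equivalently, $\langle u,v\rangle_\phi = \langle (\Cal P_{\text{opp}})^h u,\ (\Cal P_{\text{opp}})^h v\rangle_\omega$ for $u,v$ in the appropriate Hermitian dual. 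Since $(\Cal P_{\text{opp}})^h$ is a topological isomorphism with inverse $(\Cal P^{-1}_{\text{opp}})^h$ (by the Lemma preceding Remark \ref{pa1}), this is a genuine unitary-type change of variables on the form, and positive-definiteness is preserved in both directions — this gives part (2).

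For part (1), the key point is that $\Cal P$ and $\Cal P_{\text{opp}}$ are $G_\R$-equivariant (Theorem \ref{Penrose}), hence so is $\Cal P\otimes\Cal P_{\text{opp}}$ for the $\text{diag}(G_\R\times G_\R)$-action on both sides, and consequently the Hermitian transposes $(\Cal P_{\text{opp}})^h$, $(\Cal P^{-1}_{\text{opp}})^h$ intertwine the corresponding $G_\R$-actions on the Hermitian duals. Given $g\in G_\R$, invariance of $\langle\,,\,\rangle_\omega$ means $\langle g\cdot a,\ g\cdot b\rangle_\omega = \langle a,b\rangle_\omega$; applying the change of variables $a = (\Cal P_{\text{opp}})^h u$, $b = (\Cal P_{\text{opp}})^h v$ and using equivariance of $(\Cal P_{\text{opp}})^h$ to move $g$ through, one obtains $\langle g\cdot u,\ g\cdot v\rangle_\phi = \langle u,v\rangle_\phi$, and the converse follows by running the argument with $(\Cal P^{-1}_{\text{opp}})^h$. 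Concretely, one can phrase this at the level of cohomology classes: $\langle\,,\,\rangle_\omega$ is $\text{diag}(G_\R\times G_\R)$-invariant iff $[\omega]$ is a $\text{diag}(G_\R\times G_\R)$-fixed class (this is the content of part (3) of Theorem \ref{hdual} together with part (2)), and since $\Cal P\otimes\Cal P_{\text{opp}}$ is $\text{diag}(G_\R\times G_\R)$-equivariant, $[\omega]$ is fixed iff $\phi$ is fixed, iff $\langle\,,\,\rangle_\phi$ is invariant.

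I expect the main obstacle to be bookkeeping with the conjugations rather than any deep analytic difficulty: one must track the conjugate-linear isomorphism $T_\sigma\colon V_\mu\to V^\vee_\mu$ and the cohomological conjugation $\sigma$ of Remark \ref{important}(2) through the identity $T_\sigma\Cal P(w)=\Cal P_{\text{opp}}(\sigma w)$ of Remark \ref{pa1}, to be sure that the ``Hermitian'' (conjugate-linear) nature of the transpose is compatible on both sides and that no spurious sign or conjugate is introduced when comparing $\langle\,,\,\rangle_\omega$ with $\langle\,,\,\rangle_\phi$. Once the identity $\langle u,v\rangle_\phi = \langle(\Cal P_{\text{opp}})^h u,(\Cal P_{\text{opp}})^h v\rangle_\omega$ is established with the correct conjugations, both (1) and (2) are immediate formal consequences of $(\Cal P_{\text{opp}})^h$ being a $G_\R$-equivariant linear isomorphism, so the proof reduces to citing Theorem \ref{leticia}, Remark \ref{pa1}, and the $G_\R$-equivariance in Theorem \ref{Penrose}.
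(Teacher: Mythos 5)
Your proposal is correct and follows essentially the same route as the paper: the paper's proof consists precisely of the change-of-variables computation $\langle f_1,f_2\rangle_\phi = \langle \Cal P_{\text{opp}}^h f_1,\ \Cal P_{\text{opp}}^h f_2\rangle_\omega$, obtained from the identity $T_\phi=\Cal P_{\text{opp}}\circ T_\omega\circ(\Cal P_{\text{opp}})^h$ established in the proof of Theorem \ref{leticia}. Your additional remarks spelling out why (1) and (2) then follow from $(\Cal P_{\text{opp}})^h$ being a $G_\R$-equivariant isomorphism simply make explicit what the paper leaves implicit.
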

 
 \begin{proof} If $f_1, f_2 \in \big(\text{Ker}(\underline{\Cal D})\big)^h = \big(C^{\infty}_{\underline{\Cal D}}(G_\R/K_\R, {V}^{\vee}_{\mu})\big)^h$, then
 \begin{align*}
 \langle f_1 , f_2 \rangle_{\phi} &= f_1 [T_{\phi}(f_2)]
 = f_1[\Cal P_{\text{opp}} \circ T_{\omega}\circ \big( \Cal P_{\text{opp}} \big)^h (f_2)]\\
 &\text{ by the argument in the proof  of Theorem (\ref{leticia})}\\
 &= (\Cal P_{\text{opp}}^h f_1) [T_{\omega} ( \big( \Cal P_{\text{opp}} \big)^h (f_2))]\\
 &\text{by the definition of  $\Cal P_{\text{opp}}^h$}\\
 & = \langle   \Cal P_{\text{opp}} ^h(f_1) ,  \Cal P_{\text{opp}}^h(f_2) \rangle_{\omega}.
 \end{align*}
 \end{proof} 
 
 \section{A description of the unitary globalization of $A_{\fq}(\lambda)$}
 
 The aim of this section is to describe the unitary globalization of $A_{\fq}(\lambda)$.
 Our assumption on $\lambda$ (\ref{condition}),  guarantee that $A_{\fq}(\lambda)$ is irreducible
 and unitarizable \cite{Vogan84}.  
It follows that the space of $\text{diag}(G_\R\times G_\R)$-invariant  Hermitian forms on the cohomology space
 $H_c^{0,n-s}(G_\R/L_\R, \Cal L_{-\lambda}) = H^{n ,s}((G_\R/L_\R)^{\text{opp}}, \Cal L_{-\lambda})^h$ 
 is one-dimensional. 
 By the results in  sections \ref{hp} and \ref{hf}, in order to identify the unitary globalization of $A_{\fq}(\lambda)$
  it is enough to identify   a $\text{diag }(G_\R \times G_\R)$-invariant 
 section
 $F$  in $C^{\infty}_{\underline{\Cal D}\times \Cal D} (G_\R\times G_\R/(K_\R\times K_\R),  V_{\mu}\otimes V^{\vee}_{\mu})$.

It is well known,  see \cite{Harish-Chandra56},  that  each  admissible $(\fg, K)$-module may be realized as the space of $K$-finite vectors of some Hilbert space globalization.
 Let $(\pi_{\lambda}, \Cal H_{\lambda})$ be a Hilbert space globalization of
 the admissible $(\fg, K)$-module $ \mathop{Ker }(\Cal D)_{K-\text{finite}}$.
 Write  $ (\pi_{\lambda}^{\vee},  \Cal H_{\lambda}^{\vee})$  with $\pi_{\lambda}^{\vee}(g) = \pi_{\lambda}^t (g^{-1})$, 
  the contragredient representation.  
 Let $\Cal H_{\lambda}(\mu)$ be the $K$-isotypic subspace of $\Cal H_{\lambda}$ for the minimal
 $K$-type $\tau_{\mu}$. Write $E_{\mu}$ for the orthogonal $K$-equivariant projection
 of $\Cal H_{\lambda}$ onto $\Cal H_{\lambda}(\mu)$. 
 By Theorem \ref{Wong} the multiplicity of  $\Cal H_{\lambda}(\mu)$ in $\Cal H_{\lambda}$ is one.
  Choose a basis  $\{v_i\}$ of $\Cal H(\mu)$, 
orthonormal with respect to  the Hilbert space inner product $\langle \; , \; \rangle$.  Let $\{ v^*_i\}$ be  the dual basis.
We show that the   generalized spherical function
 \begin{align}\label{spherical}
(x,y) \mapsto 
 &= \big( \frac{1}{\text{dim}(\Cal H(\mu))}\big )\; \sum _i  v_i^*\big[ \pi_{\lambda}(x^{-1} y)  \cdot v_i \big]\\\nonumber
 &= \big( \frac{1}{\text{dim}(\Cal H(\mu))}\big )\; \sum _i   \pi_{\lambda}^{\vee} (x)v_i^*\big[ \pi_{\lambda}( y)  \cdot v_i \big]
 \end{align}
when suitably interpreted, determines  the $\text{diag}(G_\R\times G_\R)$-invariant  Hermitian forms on 
 $\big(C^{\infty}_{\underline{\Cal D}}(G_\R/K_\R, {V}^{\vee}_{\mu})\big)^h.$

 \subsection{A $\text{diag }(G_\R \times G_\R)$-invariant  section of $(G_\R\times G_\R/(K_\R\times K_\R),  V_{\mu}\otimes V^{\vee}_{\mu})$}

 We  interpret the function in (\ref{spherical}) 
 as a smooth section of the bundle $(G_\R\times G_\R)\times_{(K_\R \times K_\R)}(V_{\mu}\otimes V^{\vee}_{\mu})$.
 To accomplish this, we  (a) identify $V_{\mu}$  with $\Cal H(\mu)$ and (b)  realize $\Cal H(\mu)\otimes \Cal H^{\vee}(\mu)$, via Peter-Weyl Theorem, as a submodule of\newline
  $\text{span}_{\C} \{ K_\R\times K_\R \text{ matrix coefficients of  } \Cal H(\mu)\otimes \Cal H^{\vee}(\mu)\}$. Indeed,    the $K_\R\times K_\R$-module $\Cal H(\mu)\otimes \Cal H^{\vee}(\mu)$ is
equivalent to the $K_\R\times K_\R$ representation  acting on 
\begin{equation*}
\text{span}_{\C} \{ (k_1, k_2) \to  \big \langle (k_1\times k_2 ) \cdot \sum_i (v_i \otimes {v_i}^*), v_j\otimes v^*_k
\big\rangle \vert
\; j,k \in \{1, \ldots, \text{dim}(\Cal H(\mu))\}\}.
\end{equation*}
Observe that for $(x, y) \in G_\R\times G_\R$ fixed and $(k_1, k_2) \in K_\R\times K_\R$,
\begin{align*}
 \text{Trace} \big( E_{\mu}\circ \pi_{\lambda}(k_1^{-1}& x^{-1}y k_2) \circ E_{\mu}\big) =\\
& = \sum _i  (k_1\cdot v_i^*) \big[ \pi_{\lambda}(x^{-1} y) k_2 \cdot v_i \big]\\
 & = \sum_i \sum_{j,k} \langle k_2 \cdot v_i, v_j \rangle \; \langle k_1 v_i^*, v_k^*\rangle \; v^*_k\big[ \pi_{\lambda}
 (x^{-1}y) v_j \big]\\
 & = \sum_{j,k} \langle k_1\times k_2 \cdot \sum_i(v_i\otimes v_i^*), v_j,\otimes v_k^* \rangle \; v_k^*(\big[ \pi_{\lambda}
 (x^{-1}y) v_j \big]\\
 & \subset \text{span}_{\C} \{  \langle k_1\times k_2 \cdot \sum_i (v_i\otimes v_i^*), v_j\otimes v^*_k\rangle \\
 &\hskip 2in j,k \in \{1, \ldots, \text{dim}(\Cal H(\mu))\}\}.
\end{align*}

We summarize the above observation in the following Lemma.

  \begin{Lem}\label{section}
  The function
  \begin{equation*}
  F : G_\R\times G_\R \to V_{\mu}\otimes V^{\vee}_{\mu} \subset \mathrm{span}_{\C} \{ \text{matrix coefficients of } V_{\mu}\otimes V^{\vee}_{\mu}\}
  \end{equation*}
  given by
  \begin{equation*}
  F(x,y)(k_1,k_2) = \frac{1}{\dim  (\Cal H(\mu)} \;  \mathrm{Trace} \big( E_{\mu}\circ \pi_{\lambda}(k_1^{-1}x^{-1}y k_2)
 \circ E_{\mu}\big),
\end{equation*}
defines a smooth section of the vector bundle \begin{equation*}
(G_\R\times G_\R)\times_{(K_\R \times K_\R)}(V_{\mu}\otimes V^{\vee}_{\mu}).
\end{equation*}
 \end{Lem}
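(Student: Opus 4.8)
The plan is to verify directly that the formula for $F$ respects the equivalence relation defining sections of the bundle $(G_\R\times G_\R)\times_{(K_\R\times K_\R)}(V_\mu\otimes V^\vee_\mu)$, and that the resulting section is smooth. First I would recall that under the identification of $V_\mu$ with $\Cal H(\mu)$ and the Peter--Weyl realization of $\Cal H(\mu)\otimes\Cal H^\vee(\mu)$ as a space of $K_\R\times K_\R$ matrix coefficients, the function $F(x,y)$ should be read as the element of $V_\mu\otimes V^\vee_\mu$ whose matrix-coefficient representative, evaluated at $(k_1,k_2)$, is $\frac{1}{\dim\Cal H(\mu)}\,\Tr(E_\mu\circ\pi_\lambda(k_1^{-1}x^{-1}yk_2)\circ E_\mu)$. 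The computation displayed just before Lemma \ref{section} shows precisely that this expression lies in the span of the coefficients $(k_1,k_2)\mapsto\langle (k_1\times k_2)\cdot\sum_i(v_i\otimes v_i^*),\,v_j\otimes v_k^*\rangle$, so $F(x,y)$ genuinely is a well-defined element of $V_\mu\otimes V^\vee_\mu$ for each fixed $(x,y)$.

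Next I would check the $K_\R\times K_\R$-equivariance that makes $F$ a section rather than merely a function. Replacing $(x,y)$ by $(x h_1, y h_2)$ with $(h_1,h_2)\in K_\R\times K_\R$, we have
\begin{equation*}
F(xh_1,yh_2)(k_1,k_2)=\frac{1}{\dim\Cal H(\mu)}\,\Tr\big(E_\mu\circ\pi_\lambda((h_1 k_1)^{-1}x^{-1}y(h_2 k_2))\circ E_\mu\big)=F(x,y)(h_1 k_1,h_2 k_2),
\end{equation*}
which is exactly the statement that the $V_\mu\otimes V^\vee_\mu$-valued quantity transforms under $(h_1,h_2)$ by the $K_\R\times K_\R$-action on matrix coefficients, i.e.\ $F(xh_1,yh_2)=(\tau_\mu\otimes\tau^\vee_\mu)(h_1,h_2)^{-1}F(x,y)$ under the chosen trivialization. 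Here one uses that $E_\mu$ commutes with $\pi_\lambda(K_\R)$ (it is the $K$-equivariant orthogonal projection onto the isotypic subspace $\Cal H_\lambda(\mu)$), so no extra terms appear; the multiplicity-one statement of Theorem \ref{Wong}(4) guarantees $\Cal H(\mu)$ is irreducible as a $K_\R$-module, which is what legitimizes the identification $V_\mu\simeq\Cal H(\mu)$ in the first place.

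Finally I would address smoothness. The map $(x,y)\mapsto F(x,y)$ is built from $(x,y)\mapsto\pi_\lambda(x^{-1}y)$ composed with the bounded finite-rank operations $E_\mu\circ(-)\circ E_\mu$ and the trace against the fixed finite-dimensional data $\{v_i,v_i^*\}$; since $\pi_\lambda$ is a Hilbert-space representation of $G_\R$, the matrix coefficients $v_k^*[\pi_\lambda(x^{-1}y)v_j]$ are smooth functions of $(x,y)$ (indeed $v_j,v_k^*$ are $K$-finite hence smooth, even analytic, vectors), and a finite sum of smooth scalar functions times fixed basis elements of the finite-dimensional space $V_\mu\otimes V^\vee_\mu$ is a smooth $V_\mu\otimes V^\vee_\mu$-valued function. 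Combined with the equivariance just checked, this exhibits $F$ as a smooth section of the bundle, which is the assertion of the Lemma. The only genuinely delicate point is making sure the Peter--Weyl identification of $V_\mu\otimes V^\vee_\mu$ with a concrete space of matrix coefficients is used consistently on both sides of the equivariance identity; once the trivialization is pinned down as in the displayed calculation preceding the Lemma, the verification is routine.
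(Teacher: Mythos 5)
Your proposal is correct and follows essentially the same route as the paper: the paper's proof of Lemma \ref{section} is precisely the displayed matrix-coefficient computation preceding it, which you invoke for well-definedness, and your added verifications of the $K_\R\times K_\R$-transformation law and of smoothness (via analyticity of matrix coefficients at $K$-finite vectors) are the routine steps the paper leaves implicit.
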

 
 \begin{Thm}\label{key}
 The section $F$ of the vector bundle $(G_\R\times G_\R)\times_{K_\R \times K_\R}(V_{\mu}\otimes V^{\vee}_{\mu})$
 given in Lemma (\ref{section}) is annihilated by the differential operators $\underline{\Cal D} \otimes 1$ and $1 \otimes \Cal D$.
 \end{Thm}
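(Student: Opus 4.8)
The plan is to reduce the statement to a pointwise differential identity on $G_\R\times G_\R$ and then to an identity inside $V_\mu\otimes\fp$ (resp.\ $\fp\otimes V_\mu^\vee$) that follows from the defining property of the minimal $K$-type. By symmetry it suffices to treat $1\otimes\Cal D$; the argument for $\underline{\Cal D}\otimes 1$ is identical with the roles of $x$ and $y$ interchanged. Fix $x\in G_\R$ and regard $y\mapsto F(x,y)$, via Lemma \ref{section}, as a $V_\mu$-valued (equivalently $\Cal H(\mu)$-valued) function on $G_\R/K_\R$ whose value at $y$ is, up to the normalizing constant, the vector $v\mapsto E_\mu\,\pi_\lambda(x^{-1}y)\,v$ in $\mathrm{Hom}_\C(\Cal H(\mu),\Cal H(\mu))\cong V_\mu\otimes V_\mu^\vee$. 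Differentiating on the right in the $\fp$-directions, $(R(1,X_i)F)(x,y)$ corresponds to $v\mapsto E_\mu\,\pi_\lambda(x^{-1}y)\,\pi_\lambda(X_i)\,v$, so that
\begin{equation*}
[1\otimes\Cal D\,F](x,y)=\frac{1}{\dim\Cal H(\mu)}\sum_i 1\otimes\mathbb P\Bigl[E_\mu\,\pi_\lambda(x^{-1}y)\,\pi_\lambda(X_i)\,E_\mu\otimes\bar X_i\Bigr].
\end{equation*}
Since $x^{-1}y$ is arbitrary, it suffices to show that for every $w\in\Cal H$ the element $\sum_i \mathbb P\bigl[E_\mu\,\pi_\lambda(X_i)\,E_\mu\,w\otimes\bar X_i\bigr]$ of $V_\mu^-\subset V_\mu\otimes\fp$ vanishes; equivalently, with $v=E_\mu w\in\Cal H(\mu)\cong V_\mu$, that $\sum_i \mathbb P\bigl[E_\mu\pi_\lambda(X_i)v\otimes\bar X_i\bigr]=0$.

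The heart of the proof is this last identity, and it is exactly the statement that the minimal $K$-type $V_\mu$ has no ``lowering'' component. First I would note that $E_\mu\pi_\lambda(X_i)v$ lies in $\Cal H(\mu)$, so $\sum_i E_\mu\pi_\lambda(X_i)v\otimes\bar X_i$ lies in $V_\mu\otimes\fp$; one computes that this element is the image of $v$ under the canonical $K$-equivariant map $\Cal H(\mu)\to\Cal H(\mu)\otimes\fp$ obtained by composing the Dirac-type operator (Clifford action of $\fp$ via $\pi_\lambda$) with $E_\mu\otimes\mathrm{id}$. The irreducible constituents of $V_\mu\otimes\fp$ have highest weights $\mu+\alpha$ with $\alpha\in\Delta(\fp,\ft)$, while $V_\mu^-=\sum_{\beta\in\Delta(\fu\cap\fp)}\tau_{\mu-\beta}$. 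By Theorem \ref{Wong}(4), every $K$-type of $\pi_\lambda$ has highest weight $\mu+\sum n_\alpha\alpha$ with $n_\alpha\in\N$, $\alpha\in\Delta(\fu\cap\fp)$; in particular none of the weights $\mu-\beta$ with $\beta\in\Delta(\fu\cap\fp)$, $\beta\neq 0$, occurs in $\pi_\lambda|_K$. Hence $\mathbb P\circ(E_\mu\otimes\mathrm{id})\circ(\text{Clifford action})$, viewed as a $K$-map $\Cal H(\mu)\to V_\mu^-$, must be zero because its image would have to be a quotient of $V_\mu$ landing in the span of $K$-types $\tau_{\mu-\beta}$, all of which are absent from $\pi_\lambda|_K$ (and, being a summand of $V_\mu\otimes\fp$, $V_\mu^-$ consists precisely of those types). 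This gives the vanishing of $[1\otimes\Cal D\,F](x,y)$ for all $(x,y)$.

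The main obstacle, and the step deserving care, is the bookkeeping in the previous paragraph: one must be sure that the $K$-equivariant map $v\mapsto\sum_i\mathbb P[E_\mu\pi_\lambda(X_i)v\otimes\bar X_i]$ genuinely factors through $K$-types that do not appear in $\pi_\lambda|_K$, rather than merely through $K$-types absent from $V_\mu\otimes\fp$. For this I would invoke the compatibility between $\mathbb P$ (the projection onto $V_\mu^-$ inside $V_\mu\otimes\fp$) and the structure of $\pi_\lambda$: by construction $V_\mu^-$ is the sum of the $\tau_{\mu-\beta}$-isotypic summands of $V_\mu\otimes\fp$, and the composite lands in this summand; but any $\tau_{\mu-\beta}$ occurring in the image is a homomorphic image of $\Cal H(\mu)\subset\Cal H$, forcing $\tau_{\mu-\beta}$ to occur in $\pi_\lambda|_K$, contradicting Theorem \ref{Wong}(4) unless $\beta=0$, and $\beta=0\notin\Delta(\fu\cap\fp)$. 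An alternative, and perhaps cleaner, route is to observe that $F(x,\cdot)$ is (a matrix-coefficient realization of) an element of the Hilbert globalization lying in the minimal $K$-type, hence a $K$-finite vector that is mapped by $\Cal P^{-1}$ (via Theorem \ref{Penrose}) into $\mathrm{Ker}\,\Cal D$; but since we want a self-contained proof I would present the weight-combinatorial argument above, with the Dirac-operator reformulation used only to make the $K$-equivariance manifest. Once the pointwise identity is in hand, smoothness of $F$ from Lemma \ref{section} and continuity of $1\otimes\Cal D$ and $\underline{\Cal D}\otimes 1$ finish the proof.
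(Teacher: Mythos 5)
Your differentiation step is fine, but the reduction to the pointwise identity $\sum_i \mathbb P\bigl[E_\mu\pi_\lambda(X_i)v\otimes\bar X_i\bigr]=0$ for $v\in\Cal H(\mu)$ is where the argument breaks. Two things go wrong at once. First, the factor $\pi_\lambda(x^{-1}y)$ cannot be discarded: what must vanish, for every $g=x^{-1}y$, is the $\mathbb P$-projection of $\sum_i E_\mu\,\pi_\lambda(g)\,\pi_\lambda(X_i)\,v\otimes\bar X_i$ (equivalently, of the full matrix coefficients $v_k^*\bigl[\pi_\lambda(g)\pi_\lambda(X_i)v_j\bigr]$), and ``$g$ arbitrary'' does not allow you to set $g=e$: the vector $\pi_\lambda(X_i)v$ leaves $\Cal H(\mu)$, and it is exactly its components along the \emph{other} $K$-types of $\Cal H_\lambda$ that $\pi_\lambda(g)$ propagates back into the section value. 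Second, the extra $E_\mu$ you insert (projecting $\pi_\lambda(X_i)v$ back into $\Cal H(\mu)$ before applying $\mathbb P$) turns your key claim into a statement about a $K$-equivariant map $V_\mu\to V_\mu^-$, which vanishes by Schur's lemma alone because $\tau_\mu$ does not occur in $V_\mu^-$; Theorem \ref{Wong}(4) plays no genuine role in it, and your appeal to the absence of the $\tau_{\mu-\beta}$ from $\pi_\lambda|_K$ is beside the point for that claim. This is a red flag: if your identity sufficed, the same argument would show that the generalized spherical function of \emph{any} admissible representation containing the $K$-type $\tau_\mu$ lies in $\mathop{Ker}(1\otimes\Cal D)$, which is false -- the conclusion depends on $\mu$ being the bottom layer.

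What is actually needed, and what the paper does, is to control the components of the vectors $\pi_\lambda(X_\beta)v_i$ inside all of $\Cal H_\lambda$, with $\pi_\lambda(x^{-1}y)$ still present. The paper implements $1\otimes\mathbb P$ by integrating the $k$-translated section against $\bar{\chi}_{\mu-\delta}(k)$ over $K_\R$ for $\delta\in\Delta(\fu\cap\fp)$, decomposes $X_\beta\cdot v_i=\sum_{\tau\in S}\mathbb P_\tau[X_\beta\cdot v_i]$ into $K$-isotypic pieces of $\Cal H_\lambda$, and then the integral $\int_K \pi_\lambda(k)\,\mathbb P_\tau[X_\beta\cdot v_i]\,\bar{\chi}_{\mu-\delta}(k)\,dk$ is the projection onto the $\tau_{\mu-\delta}$-isotypic subspace of $\Cal H_\lambda$, which vanishes precisely because, by Theorem \ref{Wong}(4), no $\tau_{\mu-\delta}$ with $\delta\in\Delta(\fu\cap\fp)$ occurs in $\pi_\lambda|_K$. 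So your overall strategy (differentiate in $y$, then use the $K$-type structure of $\pi_\lambda$) is the right one, but the crucial vanishing must be applied to $\pi_\lambda(X_\beta)v_i$ as an element of $\Cal H_\lambda$ -- before any projection back to $\Cal H(\mu)$ and in the presence of $\pi_\lambda(x^{-1}y)$ -- not to a map defined on $\Cal H(\mu)$ alone.
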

 
 \begin{proof}
We show that $(1 \otimes \Cal D) (F) =0$ . The proof of the identity $(\underline{\Cal D} \otimes 1) (F) = 0$ is similar.
According to Definition (\ref{operators}), given $\{ X_{\beta}\}$ an orthonormal basis of $\fp$ consisting of root vectors,
we must show that
\begin{equation*}
 (1\otimes \mathbb P) \big[  \sum_{\beta\in \Delta(\fp)} (R(1\otimes X_{\beta}) F) (x,y ) \otimes X_{-\beta} \big] = 0
\end{equation*}
where $\mathbb P$ is the canonical projection  $\mathbb P: V_{\mu}\otimes \fp \to V_{\mu}^-$.
Thus, it is enough to show that for each $\delta \in \Delta(\fu\cap\fp)$,
\begin{equation*}
\int_{K_\R} \tau^{\vee}_{\mu}\otimes \tau_{\mu} (1\times k)\otimes \text{Ad}(k)\;
\big\{ \sum_{\beta\in \Delta(\fp)} \big(R(1 \otimes X_{\beta} )F\big) (x,y) \otimes X_{-\beta} \big\}\; \bar{\chi_{\mu-\delta}(k)} dk = 0,
\end{equation*}
where  $\chi_{\mu-\delta}$ is the character of the irreducible $K_\R$ module with highest weight $\mu -\delta$.
We observe, as the definition of $\Cal D$ is independent of the basis, that
\begin{align}\label{one}
& \int_K  \sum_{\beta\in \Delta(\fp)} \big\{ \big(R \big (1\otimes \text{Ad}(k) X_{\beta} )F \big) (x, y k) \otimes \text{Ad}(k) X_{-\beta} \big \} \; \bar{\chi_{\mu-\delta}(k)} dk\\ \nonumber
& = \int_K  \sum_{\beta\in \Delta(\fp)} \big\{ \big(R \big (1\otimes  X_{\beta}) F \big) (x, y k) \otimes  X_{-\beta} \big \} \; \bar{\chi_{\mu-\delta}(k)} dk.
\end{align}
 On the other hand,
 \begin{equation*}
  R  (1\otimes  X_{\beta}) \;F  (x, y k) = \sum_i v_i^* \big[ \pi_{\lambda}(x^{-1}yk)\;  X_{\beta}\cdot v_i \big].
  \end{equation*}
  As the vectors $v_i$ are $K$-finite and $X_{\beta} \in \fg$, the vector $X_{\beta}\cdot v_i$ is $K$-finite.
  Thus, there exists a finite set $S \subset \hat{K}$ so that 
  \begin{equation} 
  \label{two}
  X_{\beta}\cdot v_i
  = \sum_{\tau \in S} \mathbb P_{\tau} [ X_{\beta}\cdot v_i ].
  \end{equation}
  Replacing identity (\ref{two}) in the displayed formula (\ref{one}) we get
\begin{align}\label{three}  
 & \int_K  \sum_{\beta\in \Delta(\fp)} \big\{ \big(R (1\otimes  X_{\beta} )F \big) (x, y k) \otimes  X_{-\beta} \big \} \; \bar{\chi_{\mu-\delta}(k)} dk =\\ \nonumber
&=    \sum_{\beta\in \Delta(\fp)}  \sum_{i, \tau\in S} \int_K
v_i^*\big[ \pi_{\lambda}(x^{-1}y) \pi_{\lambda}(k) \; \mathbb P_{\tau}[ X_{\beta} v_i] \big ] \otimes X_{-\beta} \;\;
\bar{\chi_{\mu-\delta}(k)} dk.
\end{align}
By Theorem \ref{Wong} the $K$-type $\tau_{\mu-\delta}$ does not occur in $\pi_{\lambda}\vert_K$.
Hence, the right hand side in equation (\ref{three}) is zero.
\end{proof}

\begin{Thm}\label{main}
The generalized spherical function 
\begin{equation*}
F(x,y) =  \frac{1}{\text{dim}(\Cal H(\mu)} \;  \text{Trace} \big( E_{\mu}\circ \pi_{\lambda}(x^{-1}y)
 \circ E_{\mu}\big)
 \end{equation*}
  determines the unique (up to a scalar) invariant Hermitian form on $\text{Ker }(\Cal D)^h$.
 The cohomology class $[\omega]  \in  H^{(n,n)(s,s)}(G_\R/L_\R\times (G_\R/L_\R)^{\text{opp}}, \Cal L_{\lambda}\otimes \Cal L_{-\lambda})$ that corresponds to $F$ via the Penrose transform determines the unique (up to scalar)
 Hermitian form on  $H_c^{0,n-s}(G_\R/L_\R, \Cal L_{-\lambda})$.
 \end{Thm}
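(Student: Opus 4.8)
The plan is to verify that $F$ has the three defining properties of the invariant form --- it lies in the kernel space $C^{\infty}_{\underline{\Cal D}\times\Cal D}(G_\R\times G_\R/(K_\R\times K_\R),V_{\mu}\otimes V^{\vee}_{\mu})$, it is $\text{diag}(G_\R\times G_\R)$-invariant, and the pairing it defines is a nonzero Hermitian form --- and then to conclude by one-dimensionality. Membership in the kernel space is exactly Theorem~\ref{key}. Invariance is immediate from the trace formula of Lemma~\ref{section}: for $g\in G_\R$, $F(gx,gy)(k_1,k_2)=\frac{1}{\dim\Cal H(\mu)}\,\text{Trace}\big(E_{\mu}\circ\pi_{\lambda}(k_1^{-1}x^{-1}y k_2)\circ E_{\mu}\big)=F(x,y)(k_1,k_2)$, so $F$ is fixed by the diagonal (note also that the trace formula is manifestly independent of the chosen orthonormal basis of $\Cal H(\mu)$). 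Non-vanishing follows by evaluating at $x=y=e$, $k_1=k_2=e$, which gives $\frac{1}{\dim\Cal H(\mu)}\,\text{Trace}(E_{\mu})=1$.

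Next I would transport $F$ through the machinery already in place. By Theorem~\ref{leticia}, $F$ defines a separately continuous pairing $\langle\;,\;\rangle_{F}$ on $\big(C^{\infty}_{\underline{\Cal D}}(G_\R/K_\R,V^{\vee}_{\mu})\big)^h$; since the Penrose transforms $\Cal P$ and $\Cal P_{\text{opp}}$ are $G_\R$-equivariant, $\Cal P\otimes\Cal P_{\text{opp}}$ is $\text{diag}(G_\R\times G_\R)$-equivariant, so $F$ corresponds to a $\text{diag}(G_\R\times G_\R)$-invariant class $[\omega]\in H^{(n,n)(s,s)}(G_\R/L_\R\times(G_\R/L_\R)^{\text{opp}},\Cal L_{\lambda}\otimes\Cal L_{-\lambda})$, and by part~(1) of the Lemma of \S\ref{hf}, $\langle\;,\;\rangle_{F}$ is $G_\R$-invariant exactly because the pairing $\langle\;,\;\rangle_{\omega}$ attached to $[\omega]$ by parts~(2)--(3) of Theorem~\ref{hdual} is $\text{diag}(G_\R\times G_\R)$-invariant. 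The one genuinely new point is that $\langle\;,\;\rangle_{F}$ is Hermitian, i.e.\ conjugate-symmetric, and not merely sesquilinear --- equivalently, that $[\omega]$ is a \emph{real} class in the sense of Theorem~\ref{hdual}(3). For this I would use that $A_{\fq}(\lambda)$ is unitarizable, so $(\pi_{\lambda},\Cal H_{\lambda})$ may be taken to be the unitary Hilbert globalization; then $E_{\mu}\circ\pi_{\lambda}(g^{-1})\circ E_{\mu}=\big(E_{\mu}\circ\pi_{\lambda}(g)\circ E_{\mu}\big)^{*}$, so the trace formula yields $F(y,x)=\overline{F(x,y)}$ once the fibrewise conjugation $T_{\sigma}\colon V_{\mu}\to V^{\vee}_{\mu}$ and the Peter--Weyl identification of Lemma~\ref{section} are taken into account; feeding this through $\Cal P,\Cal P_{\text{opp}},(\Cal P_{\text{opp}})^h$ and Remark~\ref{pa1} gives $\langle f_2,f_1\rangle_{F}=\overline{\langle f_1,f_2\rangle_{F}}$.

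Finally, by the discussion opening this section, $A_{\fq}(\lambda)$ is irreducible and unitarizable, so the space of $\text{diag}(G_\R\times G_\R)$-invariant Hermitian forms on $H_c^{0,n-s}(G_\R/L_\R,\Cal L_{-\lambda})$ --- equivalently, under the identifications above, on $\text{Ker}(\Cal D)^h$ --- is one-dimensional; since $\langle\;,\;\rangle_{\omega}$ (resp.\ $\langle\;,\;\rangle_{F}$) is a nonzero element of it, $[\omega]$ (resp.\ $F$) determines it up to scalar, which is the assertion. I expect the Hermitian-symmetry step to be the main obstacle: it forces a careful choice of Hilbert globalization and a bookkeeping of the two conjugations in play --- the fibrewise $T_{\sigma}$ and the complex conjugation $\sigma$ on Dolbeault cohomology of Remark~\ref{important}(2) --- as they propagate through $\Cal P$ and $(\Cal P_{\text{opp}})^h$; by contrast, the invariance and non-vanishing steps are routine once the trace formula of Lemma~\ref{section} is in hand.
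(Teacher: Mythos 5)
Your argument is correct and follows essentially the same route as the paper, whose proof is simply the combination of Theorem~\ref{hdual}, Theorem~\ref{leticia}, Theorem~\ref{key} and Proposition~\ref{homeo} together with the one-dimensionality of the space of invariant Hermitian forms noted at the start of the section. The details you supply (diagonal invariance and non-vanishing from the trace formula, and the Hermitian symmetry via the unitary Hilbert globalization and the conjugations $T_{\sigma}$, $\sigma$) are exactly the points the paper leaves implicit, and they check out.
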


\begin{proof} The Theorem follows from combining Theorem \ref{hdual}, Theorem \ref{leticia}, Theorem \ref{key} and Proposition \ref{homeo}.
\end{proof}

 When $\pi_{\lambda}$ is a representation in the discrete series,
$F(x, y) = \psi_{\lambda}(x^{-1}y)$,  the function introduced by Flensted-Jensen in (\cite{Flensted-Jensen80}, (7.11)).
The image of the intertwining map $T_{\psi_{\lambda}}$ is the space
of square-integrable sections in $\text{Ker } \underline{\Cal D}$.

\subsection{Acknowledgment} We would like to thank Vladimir Soucek  for many useful conversations.
The first author thanks Charles Univeristy, Prague and the Max-Planck Institute f\"ur Mathematik for
their warm hospitality. The first author's visit to Charles University was supported by the EC centrum in
Prague through grant P201/12/G028.


\begin{thebibliography}{10}
\parskip=3pt

\bibitem{BKZ}
L.~Barchini, A.~Knapp and R.~Zierau,
\emph{Intertwining operators into Dolbeault cohomology representations},
Journal of Funct. Anal. \textbf{107}, (1992), 302--341.

\bibitem{B95}
L.~Barchini, \emph{Sz\"ego kernels associated to Zuckerman modules}
Journal of Funct. Anal. \textbf{131}, (1995), 145--182.

\bibitem{Bratten97}
T.~Bratten, \emph{Realizing representations on generalized flag manifolds},
Compositio Math. \textbf{106}, (1997), 283--319.

\bibitem{Casselman89}
W.~Casselman, \emph{Canonical extensions of Harish-Chandra modules to
representations of $G$}, Canadian Journal of Math. \textbf{41}, (1989), 385--438.

\bibitem{Eastwood-Pilato91}
M.~Eastwood and A.M.~Pilato, \emph{On the density of twistor elementary states},
Pacific Journal of Math. \textbf{151}, (1991), no.~2, 201--215.
 
\bibitem{Flensted-Jensen80}  
  M.~Flensted-Jensen, \emph{Discrete series for semisimple symmetric spaces},
  Ann. of Math. \textbf{111}, (1980), no.~2, 253--311.
  
  \bibitem{Griffiths69}
  P.~Griffiths, \emph{Hermitian differential geometry, Chern classes and positive vector bundles},
  Global Analysis (Paper in honor of K. Kodaira), University of Tokyo press, Tokyo, 1969, 253--302.
   
  \bibitem{Harish-Chandra56} 
  Harish-Chandra, \emph{Representations of semisimple Lie groups V},
  Amer. Journal Math. \textbf{78}, (1956), 1--41.

\bibitem{Laufer67}
H.~Laufer, \emph{On Serre duality and envelopes of holomorphy}, Trans. Amer. Math. Soc. \textbf{129}, (1967), 414--436.

\bibitem{Schmid67} 
W.~Schmid, \emph {Homogeneous complex manifolds and representations of semisimple Lie groups},
Dissertation, University of California, Berkeley, CA, 1967, Math. Survey Monog. \textbf{31}, 
Amer. Math. Soc., Providence, RI, 1989.

\bibitem{Schmid75} 
W.~Schmid, \emph {Some properties of square-integrable representations of semisimple Lie groups},
Annals of Math. \textbf{102}, (1975), no.~3, 535--564.

\bibitem{Schmid85} 
W.~Schmid, \emph {Boundary values problems for group invariant differential equations},
Elie Cartan et les math\'ematiques d'ajourd'hui (Lyon, juin 25-29 1984), Ast'erisque, Num\'ero hors,
1985, 311--321.

\bibitem{Serre55}
J.P.~Serre, \emph{Un th\'eor\`eme de dualit\'e}, Comment. Helv. \textbf{29}, (1955), 9--26.

\bibitem{Treves67}
F.~Treves, \emph{Topological vector spaces, distributions, and kernels},
Academic Press, New York, 1967.

\bibitem{Vogan84}
D.~Vogan, \emph{Unitarizability of certain series of representations}, Ann. of Math. \textbf{120}, (1984),
141--187.

\bibitem{VZucker84}
D.~Vogan and G.~Zuckerman, \emph{Unitary representations with non-zero cohomology}, Compositio
Math. \textbf{53}, (1984), no.~1, 51--90.

\bibitem{Vogan84}
D.~Vogan, \emph{Unitarizability of certain series of representations}, Annals of Math. \textbf{120}, (1984), 141--187.

\bibitem{Vogan08}
D.~Vogan, \emph{Unitary representations and complex analysis}, Representation theory and complex
analysis, 259--344, Lecture Notes in Mathematics \textbf{1931}, Springer, Berlin, 2008.

\bibitem{Wong95}
H.W.~Wong, \emph{Dolbeault cohomology realization of Zuckerman modules
 associated with finite rank representations}, Journal. Funct. Anal. \textbf{129},
 (1995), no.~2, 428--454.
 
 \bibitem{Wong99}
H.W.~Wong, \emph{Cohomological induction in various categories and the maximal globalization conjecture},
Duke Math. Journal \textbf{96}, (1999), 1--27.

\bibitem{Zierau}
R.~Zierau, \emph{Representations in Dolbeault cohomology}, Representations of Lie groups, Park City Math. Institute,
vol. \textbf{8}, AMS, Providence RI, (2000).

\end{thebibliography}
\end{document}